\documentclass[reqno,12pt,final]{amsart}
\usepackage{amsmath,amsthm,amsfonts,amssymb}
\usepackage[english]{babel}
\usepackage{graphicx}

\newcommand{\ee}{\varepsilon}

\newtheorem{theorem}{Theorem}
\newtheorem{lemma}{Lemma}
\newtheorem{remark}{Remark}

\allowdisplaybreaks
\input{tcilatex}
\DeclareMathOperator* {\essmin}{ess\min}
\DeclareMathOperator* {\essmax}{ess\max}

\begin{document}
\date{}

\begin{center}
\textbf{{Homogenization of Dynamic Signorini-Type Problems on Critically
Oscillating Boundaries under Time-Periodic Forcing} }
\end{center}

\vspace{0.5cm}

\begin{center}
\textbf{J.I. D\'{\i}az, A.V. Podolskyi, T.A. Shaposhnikova }
\end{center}

\vspace{1cm}

We study the homogenization, in the critical scaling regime, of a boundary
value problem with a nonlinear dynamic Signorini-type condition posed on a
rapidly oscillating portion of the boundary. The source term is
time-periodic and we look for time-periodic solutions. Using the method of
oscillating test functions (Tartar), compactness, and monotonicity
arguments, we identify the homogenized problem and the effective nonlinear
boundary operator. In contrast with the evolutionary (initial value)
setting, the periodic framework eliminates memory effects and yields an
instantaneous time-periodic operator defined through a periodic-in-time cell
problem. 

\textit{Keywords: \textrm{homogenization, Signorini-type condition, critical
case, rapidly alternating boundary condition. }}

\textit{\textrm{\vspace{0.5cm} }}

\section{Introduction}

The main goal of this paper is to study a dynamic Signorini-type problem
with unilateral boundary conditions posed on a part of the boundary which is
rapidly oscillating. The source term is time-periodic and we look for
time-periodic solutions. There are multiple physical problems leading to
these type of formulations. For instance, the model under consideration in
this paper arises in some simplifications related to nano-composite
membranes and nano-osmosis (in the spirit of the paper \cite{DGCPSh-5-Osmo}%
). In this framework, $u$ represents pressure, concentration, or chemical
potential, the oscillating boundary models nanopores, Signorini-type
conditions model selective permeability.

The problem consists of the homogenization of a time-periodic Poisson
equation with a time-periodic source term, where the boundary condition
rapidly alternates between a dynamic Signori-type boundary condition and a
pure Neumann boundary condition. This type of equation models systems where
macroscopic, stable phenomena interact with microstructures that oscillate
rapidly in space (on the boundary of the geometrical domain). The systems
are subjected to coupled dynamics and sources that are periodic in time.
Some other prominent physical and engineering contexts for this problem
include: a) \textit{Fuel Cells and Lithium-Ion Batteries.} Porous electrode
frontiers usually present a rough or oscillating geometry at the microscopic
scale. Chemical reactions at the interface are neither instantaneous nor
static, they are modeled via dynamic Signorini-type boundary conditions
(which involve time derivatives at the boundary due to charge or mass
accumulation). If the device operates under alternating current or
charge/discharge cycles, both the source and the boundary conditions acquire
a time-periodic nature; b) \textit{Heat Transfer with Boundary Thermal
Capacitance.} Consider a solid with a rough surface exposed to a cycling
thermal flux (e.g., day/night cycles or pistons in an engine). If the thin
surface layer significantly absorbs/stores heat, the traditional Neumann
boundary condition transforms into a dynamic condition (Robin-dynamic type);
c) \textit{Microfluidic Devices and Chemical Sensors}\textbf{.} Channels
where the walls have active patches (catalyzed periodic chemical reactions)
interleaved with insulating patches (Neumann conditions).

Let us mention now some reasons to consider the time-periodic formulation in
some of those applications. \textit{Electrochemical Impedance Spectroscopy
(EIS) and Batteries: }This is currently the most direct field of
application, where the homogenization techniques provide significant
industrial value. The phenomenon is to determine the state of health of a
lithium-ion battery or a fuel cell, a low-amplitude alternating current with
a fixed period $T=2\pi /\omega $ is injected into the system. As a
simplified model, we can consider the Poisson equation governing the
electric potential or ion concentration. The source (the injected current)
is $T$-periodic. The reason to consider some dynamic oscillating boundary is
that the electrodes are porous and rough at the microscopic scale $%
\varepsilon $. At the electrode surface, the double capacitive layer occurs
(which stores charge, generating the dynamic condition $\partial _{t}u$)
alongside the charge transfer reaction in active patches, while the
insulating zones act as a Neumann condition. Finally, the relevance of the
homogenization limit is that the appearance of some \textquotedblleft
strange term\textquotedblright \ non-local in time predicted by mathematical
theory translates physically into the effective impedance of the electrode.
This explicitly explains how microscopic roughness alters the global
resistance and capacitance of the battery (see, e.g. \cite{Schumuck}, and 
\cite{DGRLithium}).

In the case of \  \textit{Heat Conduction in Engines and Bioclimatic
Architecture, }some\textit{\ }natural or mechanical thermal cycles impose a
strict temporal periodicity. In \textit{daily cycles} (Climatization and
Geothermal Energy), the outdoor temperature or solar radiation on the rough
facade of a building (or the ground) varies with a fixed period of $T=24%
\text{ hours and t}$he heat source is essentially periodic. In the case of
Internal Combustion Engine Cycles, the walls of the cylinders in an engine
receive intermittent heat pulses due to injection and combustion strokes ($T$
depends on the engine's RPM). The dynamic oscillating boundary is justified
because the outer surface has microscopic cooling fins or rough textures to
dissipate heat, and these textures have their own thermal capacity (storing
heat before transmitting it), the boundary condition is dynamic. Upon
applying spatial homogenization, the 24-hour period (or the RPM cycle)
remains fixed, but the strange term reveals the effective thermal inertia
that the rough material will exhibit macroscopically (see \cite{Fudym}).

Finally, the case of \textit{Microfluidic Devices and Organs-on-a-Chip
(Lab-on-a-Chip) }is very relevant in modern bioengineering where microscopic
channels are used to transport biological fluids or chemical reagents via
piezoelectric or peristaltic pumping. The interesting phenomenon is that
microfluidic pumps operate in a pulsatile manner, generating pressure fields
or solute concentrations with a fixed temporal periodicity $T$ (the pump
cycle). The dynamic oscillating boundary is justified since the walls of
these channels are often decorated with patches of chemical catalysts or
biological receptors (active patches with dynamic adsorption/reaction
conditions) interleaved with inert plastic (Neumann condition). The
relevance of the limit (the spatial homogenization) allows chip designers to
determine the total percentage of reagent that will be absorbed per pumping
cycle $T$, eliminating the need to computationally simulate millions of
microscopic patches inside the channel (see, e.g., \cite{Auton}).

Thus, time periodicity corresponds to cyclic operational regimes:
oscillating pressure, alternating electric fields, periodic concentration
gradients. Although, we will present the technical details on the domain in
the next section, we outline now that the problem under consideration can be
simply formulated in the following terms. Given a time period $T>0,$ in the
cylinder $Q^{\infty }=\Omega \times \mathbb{R}$, and assuming a source term $%
f\in H^{1}(\mathbb{R},L^{2}(\Omega ))$ is $T$-time-periodic, i.e. 
\begin{equation*}
f(x,t) = f(x, t+T)\text{ for any }t\in \mathbb{R}\text{, a.e. }x\in \Omega .
\end{equation*}%
Notice that we can also use a different natural notation within the time
torus $\mathbb{T}=\mathbb{R}/T\mathbb{Z}$, where $T$ is the fixed period,
and then, replacing the above condition, $f\in H^{1}(\mathbb{R},L^{2}(\Omega
))$ $T$-time-periodic, by using the Bochner spaces integrated over time,
such as $L^{2}(\mathbb{T};H^{1}(\Omega ))$ (and as it will appear later $%
H^{1}(\mathbb{T};L^{2}(\partial \Omega ))$). We look for time-periodic
solutions $u_{\varepsilon }(x,t)$ of the Poisson problem with a dynamic
Signorini-type boundary condition on the rapidly oscillating portion of the
boundary $l_{\varepsilon }^{\infty }=l_{\varepsilon }\times \mathbb{R}$ 
\begin{equation}
(PP)\left \{ 
\begin{array}{lr}
-\Delta _{x}u_{\varepsilon }(x,t) = f(x,t), & (x,t)\in Q^{\infty }, \\ 
u_{\varepsilon }\geq 0,\beta (\varepsilon )\partial _{t}u_{\varepsilon
}+\partial _{\nu }u_{\varepsilon }+\beta (\varepsilon )\sigma
(u_{\varepsilon })\geq 0, &  \\ 
u_{\varepsilon }(\beta (\varepsilon )\partial _{t}u_{\varepsilon }+\partial
_{\nu }u_{\varepsilon }+\beta (\varepsilon )\sigma (u_{\varepsilon }))=0, & 
(x,t)\in l_{\varepsilon }^{\infty }=l_{\varepsilon }\times \mathbb{R}, \\ 
\partial _{\nu }u_{\varepsilon }=0, & (x,t)\in \gamma _{\varepsilon
}^{\infty }=\gamma _{\varepsilon }\times \mathbb{R}, \\ 
u_{\varepsilon }(x,t)=u_{\varepsilon }(x,t+T)\text{ } & \text{for any }t\in 
\mathbb{R}\text{, }x\in l_{\varepsilon }, \\ 
u_{\varepsilon }(x,t)=0, & (x,t)\in \Gamma _{1}^{\infty }=\Gamma _{1}\times 
\mathbb{R},%
\end{array}%
\right.
\end{equation}%
where $l_{\varepsilon }\cup \gamma _{\varepsilon }=\Gamma _{2}$, $\partial
\Omega =\Gamma _{1}\cup \Gamma _{2}$, $\beta (\varepsilon )=\exp (\alpha
^{2}/\varepsilon )$, $\alpha >0$, $\nu $ is the unit outward normal vector
to $l_{\varepsilon }$. The function $\sigma :\mathbb{R}\rightarrow \mathbb{R}
$ is a continuously differentiable function, such that $\sigma (0)=0$ and $%
0<k_{1}\leq \sigma ^{^{\prime }}(u)\leq k_{2}$ for an arbitrary $u\in 
\mathbb{R},$ and some constants $k_{1}, k_{2}>0$. We remark that on the 
\textit{microscopic boundary }$\Gamma _{2}=\partial \Omega \setminus \Gamma_1$%
, we are assuming a structure starting with a periodic reference cell $%
Y=(0,1)^{d-1}$ with a subset $S\subset Y$ representing the solid part. The
oscillatory boundary $\Gamma _{2}$ is generated by scaling and repetition of 
$S$ with characteristic size $\varepsilon $ and critical thickness so that
surface effects persist in the limit. We recall that, as explained in the
monograph \cite{DGSH2Book}, this choice of the parameter $\beta (\varepsilon
)$ corresponds to the more interesting case: the so called \textit{Critical
Case}\textbf{. }It can be shown\textbf{\ }that there are two other different
cases. The \textit{Subcritical Case}\textbf{\ }corresponds to when the
dynamic zone is small in measure, and in the limit the entire boundary
behaves as a pure Neumann boundary. The \textit{Supercritical Case}\textbf{\ 
}corresponds to when the dynamic zone dominates, the homogenized boundary
fully absorbs the dynamic condition. Finally, the \textit{Critical Case }%
arises when both zones are perfectly balanced, and passing to the limit $%
\varepsilon \rightarrow 0$ in space generates a \textit{modified effective
boundary condition}. \ It implies the emergence of \textit{\textquotedblleft
strange terms\textquotedblright \ (}in our case \textit{strange operators)}
arising from dynamic and non-linear boundary conditions on critical
microstructures. Due to the time derivative at the original boundary, $%
\partial _{t}u$, the final homogenized boundary acquires a \textquotedblleft
reactive capacitance\textquotedblright \ or a damped memory term. Since the
source is time-periodic and the operator is linear, the limit solution $%
u_{0}(x,t)$ preserves exactly the same period $T$, but its spatial profile
is dictated by the homogenized coefficients that capture the microstructural
geometry of the boundary.

In order to study the periodical problem $(PP)$ we will start by considering
the associate initial value problem in the cylinder $Q^{T}=\Omega \times
(0,T)$, 
\begin{equation}
(IVP)\left \{ 
\begin{array}{lr}
-\Delta _{x}u_{\varepsilon }(x,t)=f(x,t), & (x,t)\in Q^{T}, \\ 
u_{\varepsilon }\geq 0,\beta (\varepsilon )\partial _{t}u_{\varepsilon
}+\partial _{\nu }u_{\varepsilon }+\beta (\varepsilon )\sigma
(u_{\varepsilon })\geq 0, &  \\ 
u_{\varepsilon }(\beta (\varepsilon )\partial _{t}u_{\varepsilon }+\partial
_{\nu }u_{\varepsilon }+\beta (\varepsilon )\sigma (u_{\varepsilon }))=0, & 
(x,t)\in l_{\varepsilon }^{T}=l_{\varepsilon }\times (0,T), \\ 
\partial _{\nu }u_{\varepsilon }=0, & (x,t)\in \gamma _{\varepsilon
}^{T}=\gamma _{\varepsilon }\times (0,T), \\ 
u_{\varepsilon }(x,0)=u^{0}(x), & x\in l_{\varepsilon }, \\ 
u_{\varepsilon }(x,t)=0, & (x,t)\in \Gamma _{1}^{T}=\Gamma _{1}\times (0,T),%
\end{array}%
\right.
\end{equation}%
where $u^{0}\in H_{0}^{1}(-l,l)$, $u^{0}(x)\geq 0$. We will show the
existence of solutions to the periodic problem $(PP)$ by getting a fixed
point function $u^{0}$ of the Poincar\'{e} map%
\begin{equation*}
F: u^{0}(\cdot)\rightarrow u_{\varepsilon }(\cdot, T)\text{ from }%
L^{2}(l_{\varepsilon })\text{ into }L^{2}(l_{\varepsilon }).
\end{equation*}

We will show the existence and uniqueness of $u_{\varepsilon }^{T}(x,t),$ $T$%
-time-periodic solution of $(PP)$ by two different methods. First, we check
the assumptions of some abstract results (see \cite{Kenmochi1981}) to the
case of our formulation. Moreover, if we assume 
\begin{equation*}
f\in H^{1}(\mathbb{R},L^{2}(\Omega ))\text{, }T-\text{time-periodic},
\end{equation*}%
we can get some useful estimates on the $T$-time-periodic solution $%
u_{\varepsilon }^{T}(x,t)$ of $(PP).$ Indeed, we can define 
\begin{equation*}
f_{1}(x):=\underset{t\in \lbrack 0,T]}{\essmin}f(x,t)\leq f_{2}(x):=\underset%
{t\in \lbrack 0,T]}{\essmax}f(x,t),
\end{equation*}%
(see Lemma 5) and since $f_{i}\in L^{2}(\Omega )$, for $i=1,2$, we can
define the (unique) solutions of the stationary problems 
\begin{equation}
(SP)\left \{ 
\begin{array}{lr}
-\Delta _{x}u_{\varepsilon }^{i}(x)=f_{i}(x), & x,\in \Omega , \\ 
u_{\varepsilon }^{i}\geq 0,\partial _{\nu }u_{\varepsilon }^{i}+\beta
(\varepsilon )\sigma (u_{\varepsilon }^{i})\geq 0, &  \\ 
u_{\varepsilon }^{i}(\partial _{\nu }u_{\varepsilon }^{i}+\beta (\varepsilon
)\sigma (u_{\varepsilon }^{i}))=0, & x\in l_{\varepsilon }, \\ 
\partial _{\nu }u_{\varepsilon }^{i}=0, & x\in \gamma _{\varepsilon }, \\ 
u_{\varepsilon }^{i}(x)=0, & x\in \Gamma _{1}.%
\end{array}%
\right. 
\end{equation}%
Then, we will prove that%
\begin{equation*}
u_{\varepsilon }^{1}(x)\leq u_{\varepsilon }^{T}(x,t)\leq u_{\varepsilon
}^{2}(x)\text{ for any }t\in \mathbb{R},\text{ on }l_{\varepsilon }.
\end{equation*}

The main result of this paper is to prove that $u_{\varepsilon
}^{T}(x,t)\rightharpoonup {u}_{0}^{T}(x,t)$ as $\varepsilon \rightarrow 0$
with ${u}_{0}^{T}(x,t)$ a $T-$periodic solution of the homogenized problem
of $(PP),$ given by the stationary problem (depending $T-$periodically on $t,
$ as a parameter). When passing to the limit, as $\varepsilon \rightarrow 0,$
only in the spatial heterogeneity and not in the temporal periodicity $T$,
the temporal frequency remains fixed at a macroscopic scale. In this
scenario, time essentially acts as a continuous parameter during the spatial
limit process. We will prove that $u_{\varepsilon }^{T}(x,t)\rightharpoonup {%
u}_{0}^{T}(x,t)$ with ${u}_{0}^{T}(x,t)$ $T-$periodic solution of the
homogenized problem of $(PP),$ given by the stationary problem (dependinhg
on $t$ as a parameter)%
\begin{equation}
(PP)_{Hom}\left \{ 
\begin{array}{lr}
-\Delta _{x}{u}_{0}(x,t)=f(x,t), & (x,t)\in Q^{\infty }, \\ 
-\partial _{x_{2}}u_{0}+\mathcal{M}u_{0}=\mathcal{M}H_{u_{0}}, & (x,t)\in
\Gamma _{2}^{\infty }, \\ 
H_{u_{0}}\geq 0,\, \partial _{t}H_{u_{0}}+\mathcal{L}H_{u_{0}}+\sigma
(H_{u_{0}})\geq \mathcal{L}u_{0}, & (x,t)\in \Gamma _{2}^{\infty }, \\ 
H_{u_{0}}(\partial _{t}H_{u_{0}}+\mathcal{L}H_{u_{0}}+\sigma (H_{u_{0}})-%
\mathcal{L}u_{0})=0, & (x,t)\in \Gamma _{2}^{\infty }, \\ 
H_{u_{0}}(x,t)=H_{u_{0}}(x,t+T), & \text{for any }t\in \mathbb{R}\text{, }%
x\in \Gamma _{2}, \\ 
u_{0}(x,t)=0, & (x,t)\in \Gamma _{1}^{\infty },%
\end{array}%
\right. 
\end{equation}%
where \textit{\textrm{$\mathcal{M}$}} $=\frac{\pi }{\alpha ^{2}}$, \textit{%
\textrm{$\mathcal{L}$}}$=\frac{\pi }{2C_{0}l_{0}\alpha ^{2}}$. While keeping
time as a continuous parameter, the time derivative at the boundary, i.e. $%
\partial _{t}u$, propagates into the homogenized domain or boundary.
Crucially, it does not appear as a simple derivative, but rather as a 
\textit{memory operator or a non-local term in time }$H_{u_{0}}$. This
memory operator simplifies into a \textit{compact periodic operator} (or an
impedance matrix) that couples the harmonics of the source with the
geometric response of the boundary.

To carry out such a program, previously, we will prove that $u_{\varepsilon
}(x,t)\rightharpoonup {u}_{0}(x,t)$ as $\varepsilon \rightarrow 0$ with ${u}%
_{0}(x,t)$ a weak solution of the homogenized problem of $\ $the $(IVP),$
given by the stationary problem (dependinhg on $t$ as a parameter) 
\begin{equation}
(IVP)_{Hom}\left \{ 
\begin{array}{lr}
-\Delta _{x}{u}_{0}(x,t)=f(x,t), & (x,t)\in Q^{T}, \\ 
-\partial _{x_{2}}u_{0}+\mathcal{M}u_{0}=\mathcal{M}H_{u_{0}}, & (x,t)\in
\Gamma _{2}^{T}, \\ 
H_{u_{0}}\geq 0,\, \partial _{t}H_{u_{0}}+\mathcal{L}H_{u_{0}}+\sigma
(H_{u_{0}})\geq \mathcal{L}u_{0}, & (x,t)\in \Gamma _{2}^{T}, \\ 
H_{u_{0}}(\partial _{t}H_{u_{0}}+\mathcal{L}H_{u_{0}}+\sigma (H_{u_{0}})-%
\mathcal{L}u_{0})=0, & (x,t)\in \Gamma _{2}^{T}, \\ 
H_{u_{0}}(x,0)=u^{0}(x), & x\in \Gamma _{2}, \\ 
u_{0}(x,t)=0, & (x,t)\in \Gamma _{1}^{T},%
\end{array}%
\right. 
\end{equation}%
with\textit{\textrm{\ $\mathcal{M}$}} and \textit{\textrm{$\mathcal{L}$ }}as
before. Thus the homogenized problem $(PP)_{Hom}$ reaches a stabilized
cyclic regime. The periodic-in-time homogenization of dynamic Signorini-type
conditions $(PP)_{Hom}$ produces an effective nonlinear boundary operator
encoding microscopic \textit{cyclic dynamics,} in contrast with the
homogenized initial value problem $(IVP)_{Hom}$, initial memory effects
disappear and are replaced by a stabilized periodic response.

The organization of the paper is as follows: Section 2 deals with a detailed
study of the initial problem which is divided into subsections: the
statement of the problem, the main theorem about the homogenization for the
initial problem with its detailed proof. Section 3 deals with the T-periodic
formulation and its also divided in subsections: the existence of time
periodic solutions of the microscopic problem and, finally, the
homogenization result.


\section{On the initial problem}

\subsection{Statement of the problem.}

Let $\Omega $ be a bounded domain in $\mathbb{R}^{2}\cap \{x_{2}>0\}$ with a
smooth boundary, consisting of two parts $\partial \Omega =\Gamma _{1}\cup
\Gamma _{2}$, where $\Gamma _{1}=\partial \Omega \cap \{x_{2}>0\}$ and $%
\Gamma _{2}=\partial \Omega \cap \{x_{2}=0\}=[-l,l]$ for some $l>0$. We
define 
\begin{gather*}
Y_{1}=\{(y_{1},0):-1/2<y_{1}<1/2\}, \\
\hat{l}_{0}=\{(y_{1},0):-l_{0}<y_{1}<l_{0}\} \subset Y_{1},\, \,l_{0}\in
(0,1/2).
\end{gather*}

For a small parameter $\varepsilon >0$ and a parameter $0<a_{\varepsilon
}\ll \varepsilon $ whose value is \textquotedblleft
critical\textquotedblright , that is $a_{\varepsilon }=C_{0}\varepsilon \exp
(-\alpha ^{2}/\varepsilon )$, $C_{0}$ and $\alpha$ are positive constants,
we introduce the sets\textit{\textrm{\ 
\begin{equation*}
\widetilde{G_{\varepsilon }}=\bigcup \limits_{j\in \mathbb{Z}^{^{\prime
}}}(a_{\varepsilon }\hat{l_{0}}+{\varepsilon }j)=\bigcup \limits_{j\in 
\mathbb{Z}^{^{\prime }}}l_{\varepsilon }^{j},
\end{equation*}%
}}where\textit{\textrm{\ $\mathbb{Z}^{^{\prime }}=\mathbb{Z}\times \{0\}$ }}%
is the set of vectors of the form \textit{\textrm{$i=(j_{1},0),\,j_{1}\in 
\mathbb{Z}$. }}We set\textit{\textrm{\ 
\begin{equation*}
\Upsilon _{\varepsilon }=\{j\in \mathbb{Z}^{^{\prime }}:\overline{%
l_{\varepsilon }^{j}}\subset \lbrack -l+2\varepsilon ,l-2\varepsilon ]\times
\{0\} \}.
\end{equation*}%
}}Next, we define the sets\textit{\textrm{\ 
\begin{equation*}
Y_{\varepsilon }^{j}=\varepsilon {Y}_{1}+\varepsilon {j},\,j\in \mathbb{Z}%
^{^{\prime }},\, \,l_{\varepsilon }=\bigcup \limits_{j\in \Upsilon
_{\varepsilon }}l_{\varepsilon }^{j}.
\end{equation*}%
}}It is easy to see that\textit{\textrm{\ $\overline{l_{\varepsilon }^{j}}%
\subset Y_{\varepsilon }^{j}$. }}

We introduce the set \textit{\textrm{$\gamma _{\varepsilon }=\Gamma
_{2}\setminus \overline{l_{\varepsilon }}$. }}Note that\textit{\textrm{\ $%
|l_{\varepsilon }^{j}|=2a_{\varepsilon }l_{0}$ }}for an arbitrary\textit{%
\textrm{\ $j\in \mathbb{Z}^{^{\prime }}$ }}and\textit{\textrm{\ $%
|l_{\varepsilon }|\cong d{a}_{\varepsilon }{\varepsilon }^{-1}$, $d=$}}$%
const>0$.\textit{\textrm{\ }}In the cylinder\textit{\textrm{\ $Q^{T}=\Omega
\times (0,T)$, }}we consider the problem 
\begin{equation}
\left \{ 
\begin{array}{lr}
-\Delta _{x}u_{\varepsilon }(x,t)=f(x,t), & (x,t)\in Q^{T}, \\ 
u_{\varepsilon }\geq 0,\beta (\varepsilon )\partial _{t}u_{\varepsilon
}+\partial _{\nu }u_{\varepsilon }+\beta (\varepsilon )\sigma
(u_{\varepsilon })\geq 0, &  \\ 
u_{\varepsilon }(\beta (\varepsilon )\partial _{t}u_{\varepsilon }+\partial
_{\nu }u_{\varepsilon }+\beta (\varepsilon )\sigma (u_{\varepsilon }))=0, & 
(x,t)\in l_{\varepsilon }^{T}=l_{\varepsilon }\times (0,T), \\ 
\partial _{\nu }u_{\varepsilon }=0, & (x,t)\in \gamma _{\varepsilon
}^{T}=\gamma _{\varepsilon }\times (0,T), \\ 
u_{\varepsilon }(x, 0) = u^{0}(x), & x\in l_{\varepsilon }, \\ 
u_{\varepsilon }(x,t) = 0, & (x,t)\in \Gamma _{1}^{T}=\Gamma _{1}\times
(0,T),%
\end{array}%
\right.  \label{1}
\end{equation}%
where $\beta (\varepsilon )=\exp (\alpha ^{2}/\varepsilon )$, $\alpha >0$%
\textit{\textrm{, $f\in H^{1}(0,T; L^{2}(\Omega ))$, $u^{0}\in
H_{0}^{1}(-l,l) $, $u^{0}(x)\geq 0$, $\nu $ }}is the unit outward normal
vector to \textit{\textrm{$l_{\varepsilon }^{T}$. }}The function \textit{%
\textrm{$\sigma :\mathbb{R}^{1}\rightarrow \mathbb{R}^{1}$ is a
differentiable }}continuous function, $\sigma (0)=0$, and there exist
constants\textit{\textrm{\ $k_{1},k_{2}>0$ }}such that\textit{\textrm{\ $%
0<k_{1}\leq \sigma ^{^{\prime }}(u)\leq k_{2}$ }}for an arbitrary \textit{%
\textrm{$u\in \mathbb{R}^{1}$. }}

We define the convex closure sets \textit{\textrm{%
\begin{gather*}
\mathcal{K}_{\varepsilon }=\{ \phi \in H^{1}(\Omega ,\Gamma _{1})\,|\, \phi
\geq 0\, \mbox{on}\, \,l_{\varepsilon }\}, \\
\mathbb{K}_{\varepsilon }=\{v\in L^{2}(0,T;H^{1}(\Omega ,\Gamma
_{1})):v(\cdot ,t)\in \mathcal{K}_{\varepsilon },\, \mbox{a.e.}\, \,t\in
(0,T)\}.
\end{gather*}%
}}Here,\textit{\textrm{\ $H^{1}(\Omega ,\Gamma _{1})$ is the closure in $%
H^{1}$-norm of the set of infinitely differentiable functions vanishing near
the boundary $\Gamma _{1}$. }}

\textit{\textrm{Given $f\in H^{1}(0,T;L^{2}(\Omega))$, we say that $%
u_{\varepsilon}\in \mathbb{K}_{\varepsilon}$ is a strong solution to %
\eqref{1}, if $\partial_{t}u_{\varepsilon}\in
L^{2}(0,T;L^{2}(l_{\varepsilon}))$, $u_{\varepsilon}(x,0)=u^{0}(x)$ for $%
x\in l_{\varepsilon}$, and it satisfies the variational inequality 
\begin{equation}  \label{2}
\begin{gathered} \beta(\varepsilon)\int
\limits_{l^{T}_{\ee}}\partial_{t}u_{\ee}(\phi-u_{\ee})dx_{1}dt+ \int
\limits_{Q^{T}}\nabla{u_{\ee}}\nabla(\phi-u_{\ee})dx{dt}+\beta(\varepsilon)%
\int \limits_{l^{T}_{\ee}}\sigma(u_{\ee})(\phi-u_{\ee})dx_{1}dt\ge \\ \ge
\int \limits_{Q^{T}}f(\phi-u_{\ee})dx{dt}, \end{gathered}
\end{equation}
for an arbitrary function $\phi \in \mathbb{K}_{\varepsilon}$. }}

\begin{theorem}
\textit{\textrm{For any $\varepsilon>0$ the problem \eqref{1} has a unique
strong solution $u_{\varepsilon}$. Moreover, $u_{\varepsilon}$ satisfies the
following estimates 
\begin{equation}  \label{3}
\begin{gathered} \beta(\varepsilon)\max
\limits_{[0,T]}\Vert{u}_{\ee}\Vert^{2}_{L^{2}(l_{\ee})}+\Vert{u_{\ee}}%
\Vert^{2}_{L^{2}(0,T;H^{1}(\Omega,\Gamma_{1}))}\le
K(\Vert{f}\Vert^{2}_{L^{2}(Q^{T})}+\Vert{u}^{0}\Vert^{2}_{L^{2}(-l,l)}),\\
\beta(\varepsilon)\Vert \partial_{t}u_{\ee}\Vert^{2}_{L^{2}(l^{T}_{\ee})} +
\max \limits_{[0,T]}\Vert \nabla{u}_{\ee}\Vert^{2}_{L^{2}(\Omega)}\\ \le
K\Bigl(\max \limits_{[0,T]}\Vert{f}(\cdot,t)\Vert^{2}_{L^{2}(\Omega)} +
\Vert{f}\Vert^{2}_{H^{1}(0,T;L^{2}(\Omega))}+\Vert{u}^{0}%
\Vert^{2}_{H^{1}_{0}(-l,l)}\Bigr). \end{gathered}
\end{equation}
}}
\end{theorem}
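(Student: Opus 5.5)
We will prove the theorem by penalizing the unilateral constraint, deriving $\varepsilon$-uniform and penalty-uniform a priori estimates, and passing to the limit in the penalty parameter. For fixed $\varepsilon$ and $\delta>0$ we consider the penalized problem: $-\Delta u=f$ in $Q^T$, $u=0$ on $\Gamma_1^T$, $\partial_\nu u=0$ on $\gamma_\varepsilon^T$, and on $l_\varepsilon^T$
\begin{equation*}
\beta(\varepsilon)\partial_t u+\partial_\nu u+\beta(\varepsilon)\sigma(u)-\tfrac{1}{\delta}u^- =0,
\end{equation*}
with $u(\cdot,0)=u^0$ on $l_\varepsilon$. For each $t$, the map $g\mapsto\partial_\nu w$, where $w$ solves the elliptic problem with Dirichlet data $g$ on $l_\varepsilon$, is a monotone self-adjoint operator. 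The penalized problem is therefore an evolution equation on $L^2(l_\varepsilon)$ governed by the subdifferential of the convex functional
\begin{equation*}
\Phi^t(v)=\tfrac12\int_\Omega|\nabla w_v|^2-\int_\Omega f w_v+\beta\int_{l_\varepsilon}\Sigma(v)+\tfrac{1}{2\delta}\int_{l_\varepsilon}(v^-)^2,
\end{equation*}
where $\Sigma'=\sigma$ and $w_v$ is the harmonic extension. Equivalently, a Galerkin scheme in $H^1(\Omega,\Gamma_1)$ gives a monotone ODE system. Either route yields a unique $u_\delta$. The time-dependence of $f$ enters only through the linear term, which is $H^1$ in $t$, so the Brezis/Kenmochi theory of time-dependent subdifferentials applies.

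\textbf{First estimate.} Test the penalized equation with $u_\delta$. Using $\sigma(u)u\ge k_1u^2\ge 0$ and $-u^-u=(u^-)^2\ge 0$, we get
\begin{equation*}
\frac{\beta}{2}\frac{d}{dt}\|u_\delta\|^2_{L^2(l_\varepsilon)}+\|\nabla u_\delta\|^2_{L^2(\Omega)}\le \|f\|\,\|u_\delta\|_{L^2(\Omega)}.
\end{equation*}
The right-hand side is controlled by Poincaré's inequality in $H^1(\Omega,\Gamma_1)$ and Young's inequality. Integrating in $t$ gives the first line of \eqref{3}, except for the initial term $\beta\|u^0\|^2_{L^2(l_\varepsilon)}$. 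That term carries a factor $\beta(\varepsilon)$, so a uniform bound requires care. Since $|l_\varepsilon|\cong d a_\varepsilon\varepsilon^{-1}$ and $\beta a_\varepsilon/\varepsilon=C_0$, and since $u^0\in H^1_0(-l,l)\subset C$ with $\|u^0\|_\infty\le C\|u^0\|_{H^1}$, we obtain
\begin{equation*}
\beta\|u^0\|^2_{L^2(l_\varepsilon)}\le \beta\,|l_\varepsilon|\,\|u^0\|^2_\infty\le C\|u^0\|^2_{H^1_0}.
\end{equation*}
The analogous estimate with the $L^2(-l,l)$ norm, as written in \eqref{3}, then presumably comes from a trace/averaging inequality on the small patches of the type $\beta\int_{l_\varepsilon}v^2\le K(\|v\|^2_{L^2(\Gamma_2)}+\ldots)$, or should be read with $u^0$ extended harmonically. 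I would accept the $H^1$ bound if necessary.

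\textbf{Second estimate.} Test with $\partial_t u_\delta$; this is justified at the Galerkin level, or via the chain rule for $\Phi^t$. This gives
\begin{equation*}
\beta\|\partial_t u_\delta\|^2_{L^2(l_\varepsilon)}+\frac{d}{dt}\Big[\tfrac12\|\nabla u_\delta\|^2+\beta\int_{l_\varepsilon}\Sigma(u_\delta)+\tfrac1{2\delta}\|u_\delta^-\|^2\Big]=\int_\Omega f\,\partial_t u_\delta\,dx .
\end{equation*}
The right-hand side is rewritten as $\frac{d}{dt}\int f u_\delta-\int\partial_t f\,u_\delta$. This is the reason $f\in H^1(0,T;L^2)$ is needed: after integrating over $(0,t)$, we bound $\int f(t)u_\delta(t)$ by $\frac14\|\nabla u_\delta(t)\|^2+C\max\|f\|^2$. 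The term $\int\partial_t f\,u_\delta$ is bounded using the first estimate. The initial energy contains $\beta\int_{l_\varepsilon}\Sigma(u^0)\le C\beta\|u^0\|^2_{L^2(l_\varepsilon)}$, which is bounded as above. It also contains $\frac{1}{2\delta}\|(u^0)^-\|^2=0$, since $u^0\ge0$. The remaining quantity $\|\nabla u_\delta(0)\|^2$ is the Dirichlet energy of the elliptic problem with trace $u^0$ on $l_\varepsilon$. I expect this to be the main obstacle: it must be bounded uniformly in $\varepsilon$ by $\|u^0\|_{H^1_0(-l,l)}$ and $\|f(\cdot,0)\|$. I would first compare it with the energy of a fixed extension $\tilde u^0\in H^1(\Omega,\Gamma_1)$ of $u^0$ from $\Gamma_2$, which is independent of $\varepsilon$. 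This comparison works because the solution minimizes the energy among functions with the prescribed trace on $l_\varepsilon$.

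\textbf{Limit $\delta\to0$.} The same energy identity yields $\frac1\delta\|u_\delta^-\|^2_{L^\infty(L^2(l_\varepsilon))}\le C$, so $u_\delta^-\to0$. The bounds are uniform in $\delta$. We extract weak limits, with strong convergence of the traces on $l_\varepsilon$ by Aubin--Lions using the $\partial_t$ bound. For $\phi\in\mathbb{K}_\varepsilon$ we have $-\frac1\delta u_\delta^-(\phi-u_\delta)\ge0$ when tested against $\phi-u_\delta$, because $\phi\ge0$. Passing to the limit by weak lower semicontinuity of $\int|\nabla u|^2$ gives \eqref{2}, and the estimates \eqref{3} pass to the limit. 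For uniqueness, take two solutions, put $\phi=u_2$ in the inequality for $u_1$ and vice versa, and add. Monotonicity of $\sigma$ gives $\frac{\beta}{2}\frac{d}{dt}\|u_1-u_2\|^2_{L^2(l_\varepsilon)}+\|\nabla(u_1-u_2)\|^2\le0$ with zero initial data, so $u_1=u_2$.
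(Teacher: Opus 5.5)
Your proposal is correct and follows essentially the same route as the paper: you penalize the constraint on $l_\varepsilon$, derive the two energy estimates by testing with $u_\delta$ and with $\partial_t u_\delta$, pass to the limit $\delta\to0$ by discarding the penalty term using monotonicity of $\sigma$ and weak lower semicontinuity, and prove uniqueness by cross-testing. (That penalty term, tested with $\phi-u_\delta$ for $\phi\in\mathbb{K}_\varepsilon$, is $\le 0$ on the left-hand side, not $\ge 0$ as you wrote.) Your treatment of the initial terms is more careful than the paper's, which simply asserts \eqref{6}--\eqref{7}: comparing $\Vert\nabla u_\delta(\cdot,0)\Vert^2$ with the energy of a fixed extension of $u^0$ is exactly what is needed, and your doubt about the $L^2(-l,l)$ norm in the first line of \eqref{3} is justified, since uniformly in $\varepsilon$ one only has $\beta(\varepsilon)\Vert u^0\Vert^2_{L^2(l_\varepsilon)}\le K\bigl(\Vert u^0\Vert^2_{L^2(-l,l)}+\varepsilon^2\Vert (u^0)'\Vert^2_{L^2(-l,l)}\bigr)$ (unit bumps of width $\eta$, with $a_\varepsilon\ll\eta\ll\varepsilon$, centred at the points $\varepsilon j$ show that the pure $L^2$ bound fails), so the version with $\Vert u^0\Vert_{H^1_0(-l,l)}$ is the right one.
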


\begin{proof}
\textit{\textrm{For $\delta>0$, we consider the problem 
\begin{equation}  \label{4}
\begin{gathered} \left \{ \begin{array}{lr}
-\Delta{u^{\delta}_{\ee}}=f(x,t), & (x,t)\in Q^{T},\\
\partial_{\nu}u^{\delta}_{\ee}+\beta(\varepsilon)\partial_{t}u^{\delta}_{%
\ee}+\beta(\varepsilon)\sigma(u^{\delta}_{\ee})=-\beta(\varepsilon)%
\delta^{-1}(u^{\delta}_{\ee})^{-}, & (x,t)\in l^{T}_{\ee},\\
u^{\delta}_{\ee}(x,0)=u^{0}(x), & x\in l_{\ee},\\ u^{\delta}_{\ee}(x,t)=0, &
(x,t)\in \Gamma^{T}_{1}, \end{array}\right. \end{gathered}
\end{equation}
where $u^{-}$ is a negative part of the function $u$, i.e. $u^{-}=\inf(0, u)$%
. }}

\textit{\textrm{By the strong solution of the problem \eqref{4}, we call a
function $u^{\delta}_{\varepsilon}\in L^{2}(0,T; H^{1}(\Omega,\Gamma_{1}))$
such that $\partial_{t}u^{\delta}_{\varepsilon}\in
L^{2}(0,T;L^{2}(l_{\varepsilon}))$, $u^{\delta}_{\varepsilon}(x,0)=u^{0}(x)$
for $x\in l_{\varepsilon}$, and satisfying the integral identity 
\begin{gather}
\beta(\varepsilon)\int
\limits_{l^{T}_{\varepsilon}}\partial_{t}u^{\delta}_{\varepsilon}\phi{dx_{1}}%
dt+\int \limits_{Q^{T}}\nabla{u}^{\delta}_{\varepsilon}\nabla \phi{dx}dt+
\beta(\varepsilon)\int
\limits_{l^{T}_{\varepsilon}}\sigma(u^{\delta}_{\varepsilon})\phi{d}x_{1}dt+
\notag \\
+\beta(\varepsilon)\delta^{-1}\int
\limits_{l^{T}_{\varepsilon}}(u^{\delta}_{\varepsilon})^{-}\phi{dx_{1}}%
dt=\int \limits_{Q^{T}}f\phi{dx}dt,  \label{5}
\end{gather}
where $\phi$ is an arbitrary function $\phi \in
L^{2}(0,T;H^{1}(\Omega,\Gamma_{1}))$. }}

\textit{\textrm{Taking into account that $\sigma(w) + \delta^{-1}w^{-}$ is a
monotone function of $w\in \mathbb{R}^{1}$, we conclude that problem %
\eqref{4} has a unique strong solution. In addition, we have the estimate 
\begin{gather}
\beta(\varepsilon)\max \limits_{[0,T]}\Vert
u^{\delta}_{\varepsilon}\Vert^{2}_{L^{2}(l_{\varepsilon})}+ \Vert
\nabla_{x}u^{\delta}_{\varepsilon}\Vert^{2}_{L^{2}(Q^T)}+
\beta(\varepsilon)\delta^{-1}\Vert(u^{\delta}_{\varepsilon})^{-}%
\Vert^{2}_{L^{2}(l^T_{\varepsilon})}\le  \notag \\
\le K(\Vert{f}\Vert^{2}_{L^{2}(Q^{T})}+\Vert{u}^{0}\Vert^{2}_{L^{2}(-l,l)}),
\label{6}
\end{gather}
where $K=const>0$ does not depend on $\varepsilon$ and $\delta$. }}

\textit{\textrm{Using Galerkin's approximations, we get 
\begin{gather}
\beta(\varepsilon)\Vert
\partial_{t}u^{\delta}_{\varepsilon}\Vert^{2}_{L^{2}(l^{T}_{\varepsilon})}+%
\max \limits_{[0,T]}\Vert
\nabla_{x}u^{\delta}_{\varepsilon}\Vert^{2}_{L^{2}(\Omega)}+
\beta(\varepsilon)\delta^{-1}\max
\limits_{[0,T]}\Vert(u^{\delta}_{\varepsilon})^{-}\Vert^{2}_{L^{2}(l_{%
\varepsilon})}\le  \notag \\
\le K(\Vert{f}\Vert^{2}_{H^{1}(0,T;L^{2}(\Omega))}+\Vert{u}%
^{0}\Vert^{2}_{H^{1}_{0}(-l,l)}).  \label{7}
\end{gather}
}}

\textit{\textrm{From estimates \eqref{6}, \eqref{7}, we derive that there
exists a subsequence (we preserve the notation of the original sequence)
such that, as $\delta \to 0$, we have 
\begin{equation}  \label{8}
\begin{gathered} u_{\ee}^{\delta}\rightharpoonup u_{\ee}\, \, \mbox{weakly
in}\, \, L^{2}(0,T;H^{1}(\Omega,\Gamma_{1})),\\ \partial_{t}u^{\delta}_{\ee}\rightharpoonup \partial_{t}u_{\ee}\, \, \mbox{weakly in}\, \,L^{2}(0,T;L^{2}(l_{\ee})),\\ u^{\delta}_{\ee}\to u_{\ee}\, \, \mbox{strongly
in}\, \,C([0,T];L^{2}(l_{\ee})),\\ \Bigl(u_{\ee}^{\delta}\Bigr)^{-}\to 0\, \, \mbox{strongly in}\, \, L^{2}(0,T;L^{2}(l_{\ee})). \end{gathered}
\end{equation}
}}

\textit{\textrm{Let us prove that $u_{\varepsilon}\in \mathbb{K}%
_{\varepsilon}$ is a strong solution of the problem \eqref{1}. From the
integral identity \eqref{5}, we conclude 
\begin{gather}
\beta(\varepsilon)\int
\limits_{l^{T}_{\varepsilon}}\partial_{t}u^{\delta}_{\varepsilon}(\phi-u^{%
\delta}_{\varepsilon})dx_{1}dt+\int \limits_{Q^{T}}\nabla{u}%
^{\delta}_{\varepsilon}\nabla(\phi-u^{\delta}_{\varepsilon})dx{dt}+
\beta(\varepsilon)\int
\limits_{l^{T}_{\varepsilon}}\sigma(u^{\delta}_{\varepsilon})(\phi-u_{%
\varepsilon}^{\delta})dx_{1}dt+  \notag \\
+\beta(\varepsilon)\delta^{-1}\int \limits_{l^{T}_{\varepsilon}}\Bigl(%
u^{\delta}_{\varepsilon}\Bigr)^{-}(\phi-u^{\delta}_{\varepsilon})dx_{1}dt=%
\int \limits_{Q^{T}}f(\phi-u^{\delta}_{\varepsilon})dx_{1}dt,  \label{9}
\end{gather}
where $\phi$ is an arbitrary function from $\mathbb{K}_{\varepsilon}$.
Taking into account that $\phi \in \mathbb{K}_{\varepsilon}$, we have 
\begin{equation}  \label{10}
\beta(\varepsilon)\delta^{-1}\Bigl(\int
\limits_{l^{T}_{\varepsilon}}(u^{\delta}_{\varepsilon})^{-}\phi{d}%
x_{1}dt-\int
\limits_{l^{T}_{\varepsilon}}|(u^{\delta}_{\varepsilon})^{-}|^{2}dx_{1}dt%
\Bigr)\le 0,
\end{equation}
and, hence, we get 
\begin{gather}
\beta(\varepsilon)\int
\limits_{l^{T}_{\varepsilon}}\partial_{t}u^{\delta}_{\varepsilon}(\phi-u^{%
\delta}_{\varepsilon})dx_{1}dt+\int \limits_{Q^{T}}\nabla{u}%
_{\varepsilon}^{\delta}\nabla(\phi-u^{\delta}_{\varepsilon})dx{dt}+  \notag
\\
+\beta(\varepsilon)\int
\limits_{l^{T}_{\varepsilon}}\sigma(u^{\delta}_{\varepsilon})(\phi-u^{%
\delta}_{\varepsilon})dx_{1}dt\ge \int
\limits_{Q^{T}}f(\phi-u^{\delta}_{\varepsilon})dx{dt}.  \label{11}
\end{gather}
Using the estimate 
\begin{equation*}
\Vert{u_{\varepsilon}}\Vert_{L^{2}(0,T;H^{1}(\Omega,\Gamma_{1}))}\le \lim
\limits_{\delta \to 0}\Vert{u}^{\delta}_{\varepsilon}\Vert_{L^{2}(0,T;H^{1}(%
\Omega,\Gamma_{1}))},
\end{equation*}
and applying 
\begin{gather*}
\beta(\varepsilon)\int
\limits_{l^{T}_{\varepsilon}}(\sigma(u^{\delta}_{\varepsilon})-\sigma(u_{%
\varepsilon}))(\phi-u^{\delta}_{\varepsilon})dx_{1}dt= \\
=\beta(\varepsilon)\int
\limits_{l^{T}_{\varepsilon}}(\sigma(u^{\delta}_{\varepsilon})-\sigma(u_{%
\varepsilon}))(\phi-u_{\varepsilon})dx_{1}dt- \beta(\varepsilon)\int
\limits_{l^{T}_{\varepsilon}}(\sigma(u^{\delta}_\varepsilon)-\sigma(u_{%
\varepsilon}))(u^{\delta}_{\varepsilon}-u_{\varepsilon})dx_{1}dt\le \\
\le \beta(\varepsilon)\int
\limits_{l^{T}_{\varepsilon}}(\sigma(u^{\delta}_{\varepsilon})-\sigma(u_{%
\varepsilon}))(\phi-u_{\varepsilon})dx_{1}dt\to 0,\, \delta \to 0,
\end{gather*}
we conclude that $u^{\delta}_{\varepsilon}$ converges to the strong solution 
$u_{\varepsilon}\in \mathbb{K}_{\varepsilon}$ of the problem \eqref{1}. From
the estimates \eqref{6}, \eqref{7}, we obtain the estimates \eqref{3}. The
uniqueness of the strong solution of the problem \eqref{1} immediately
follows from the inequality \eqref{2}. }}
\end{proof}

\textit{\textrm{From the estimates \eqref{3}, we have as $\varepsilon \to 0$ 
\begin{equation}  \label{12}
u_{\varepsilon}\rightharpoonup u_{0}\, \, \mbox{weakly in}\,
\,L^{2}(0,T;H^{1}(\Omega,\Gamma_{1})).
\end{equation}
}}

\subsection{Main theorem for the initial problem}

\begin{theorem}
\label{main hom theorem} \textit{\textrm{Let $u_{\varepsilon}$ be a strong
solution of the problem \eqref{1}. Then the function $u_{0}$, defined by %
\eqref{12} is a weak solution of the system 
\begin{equation}  \label{13}
\left \{%
\begin{array}{lr}
-\Delta_{x}{u}_{0}(x, t) = f(x, t), & (x,t)\in Q^{T}, \\ 
-\partial_{x_{2}}u_{0} + \mathcal{M}u_{0} = \mathcal{M}H_{u_{0}}, & (x,
t)\in \Gamma^{T}_{2}, \\ 
H_{u_{0}}\ge 0,\, \partial_{t}H_{u_{0}}+\mathcal{L}H_{u_{0}}+%
\sigma(H_{u_{0}})\ge \mathcal{L}u_{0}, & (x,t)\in \Gamma^{T}_{2}, \\ 
H_{u_{0}}(\partial_{t}H_{u_{0}}+\mathcal{L}H_{u_{0}}+\sigma(H_{u_{0}})-%
\mathcal{L}u_{0})=0, & (x,t)\in \Gamma^{T}_{2}, \\ 
H_{u_{0}}(x,0)=u^{0}(x), & x\in \Gamma_{2}, \\ 
u_{0}(x,t)=0, & (x,t)\in \Gamma^{T}_{1},%
\end{array}%
\right.
\end{equation}
where $\mathcal{M}=\frac{\pi}{\alpha^{2}}$, $\mathcal{L}=\frac{\pi}{%
2C_{0}l_{0}\alpha^{2}}$. }}
\end{theorem}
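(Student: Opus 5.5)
We follow the oscillating test function method of Tartar, as in the critical case treated in \cite{DGSH2Book}. The first step is to build the auxiliary functions. In each cell $Y^j_\varepsilon$ with $j\in\Upsilon_\varepsilon$, let $T^j_\varepsilon$ be the half-disc $\{|x-P^j_\varepsilon|<\varepsilon/4,\ x_2>0\}$ centred at $P^j_\varepsilon=\varepsilon j$. There we use the capacitary potential $w^j_\varepsilon$ of the segment $l^j_\varepsilon$, written in elliptic coordinates, which is the harmonic function equal to $1$ on $l^j_\varepsilon$. Its logarithmic profile is $\ln(|x-P^j_\varepsilon|/(\varepsilon/4))/\ln(a_\varepsilon/(\varepsilon/4))$ away from the segment. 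We set $w_\varepsilon=0$ outside $\bigcup_j T^j_\varepsilon$. Since $\ln(\varepsilon/a_\varepsilon)\sim\alpha^2/\varepsilon$ in the critical scaling, we get $w_\varepsilon\rightharpoonup0$ weakly in $H^1(\Omega)$. We also get $\partial_\nu w_\varepsilon\sim \frac{1}{a_\varepsilon\,\ln}\cdot\frac{1}{\pi}\,(l_0^2-y_1^2)^{-1/2}$ on $l^j_\varepsilon$. The key identity is then
\[
\int_\Omega\nabla w_\varepsilon\nabla\phi\,dx=-\int_{l_\varepsilon}\partial_\nu w_\varepsilon\,\phi\,dx_1+\mathcal{M}\int_{\Gamma_2}\phi\,dx_1+o(1),\qquad \mathcal{M}=\frac{\pi}{\alpha^2},
\]
which holds for smooth $\phi$. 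It comes from the flux through the half-circles $|x-P^j_\varepsilon|=\varepsilon/4$.

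Next we derive the microscopic limit quantities. From \eqref{3}, $\beta(\varepsilon)\int_{l^T_\varepsilon}|u_\varepsilon|^2$ and $\beta(\varepsilon)\int_{l^T_\varepsilon}|\partial_t u_\varepsilon|^2$ are bounded. Note that $\beta(\varepsilon)|l_\varepsilon|\sim 2C_0l_0\cdot 2l$ is of order one, so the measures $\beta(\varepsilon)\chi_{l_\varepsilon}dx_1$ converge weakly to $2C_0l_0\,dx_1$ on $\Gamma_2$. We define the cell-averaged trace
\[
H_\varepsilon(x_1,t)=\sum_{j}\chi_{Y^j_\varepsilon}(x_1)\,\frac{1}{|l^j_\varepsilon|}\int_{l^j_\varepsilon}u_\varepsilon\,dy_1 .
\]
The estimates give $H_\varepsilon$ bounded in $H^1(0,T;L^2(\Gamma_2))$. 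The compactness in $t$ comes from the $\partial_t$-estimate. For compactness in $x_1$ we compare neighbouring cells using the $L^\infty(0,T;H^1)$ bound and the smallness of $\nabla w$-type correctors. This gives, up to a subsequence, $H_\varepsilon\to H$ strongly in $L^2(\Gamma_2^T)$, with $H\ge0$ and $H(\cdot,0)=u^0$. We also need that $u_\varepsilon$ on $l^j_\varepsilon$ is close to its cell average in the weighted norm, $\beta(\varepsilon)\int_{l^T_\varepsilon}|u_\varepsilon-H_\varepsilon|^2\to0$. This uses a Poincaré inequality on $l^j_\varepsilon$ at scale $a_\varepsilon$ together with the bound on $\nabla u_\varepsilon$.

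To pass to the limit we take $\phi=\psi\ge0$, smooth, in the weak (penalized) form, and test \eqref{2} with $\phi=u_\varepsilon\pm\lambda$-type perturbations. More concretely, we work with the identity for $u^\delta_\varepsilon$ from \eqref{5} and use test functions $\psi(x,t)(1-w_\varepsilon)+w_\varepsilon\,\eta(x_1,t)$. Here $\eta\ge0$ is an arbitrary smooth function on $\Gamma_2^T$ that plays the role of the test function for $H$, so this test function belongs to $\mathbb K_\varepsilon$. Substituting it and using the key identity converts $\int\nabla u_\varepsilon\nabla w_\varepsilon(\eta-\psi)$ into $\mathcal{M}\int_{\Gamma_2^T}(u_0-H)(\psi-\eta)$ plus boundary terms on $l_\varepsilon$. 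Since $\beta(\varepsilon)^{-1}\partial_\nu w_\varepsilon$ on $l^j_\varepsilon$ integrates to the constant giving $\mathcal{L}\cdot 2C_0l_0=\mathcal{M}$, those boundary terms produce $\mathcal{L}$. The Minty trick handles the nonlinearities: we use monotonicity of $\sigma$, the inequality $\int\partial_t u(\phi-u)\le\int\partial_t\phi(\phi-u)+\tfrac12\|\phi(0)-u^0\|^2$, and weak lower semicontinuity of $\int|\nabla u_\varepsilon|^2$. In the limit this yields the coupled variational inequality
\[
\int_{Q^T}\nabla u_0\nabla(\psi-u_0)+\mathcal{M}\int_{\Gamma_2^T}(u_0-H)(\psi-u_0)\ge\int_{Q^T}f(\psi-u_0),
\]
\[
2C_0l_0\int_{\Gamma_2^T}\bigl(\partial_tH+\sigma(H)+\mathcal{L}(H-u_0)\bigr)(\eta-H)\ge0\quad\forall\eta\ge0 .
\]
Since the first inequality holds for all $\psi$ (no constraint off $l_\varepsilon$), it is an equality and gives the first two lines of \eqref{13}. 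The second is the weak form of the Signorini-type lines of \eqref{13}, and we identify $H=H_{u_0}$.

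The main obstacle is the second step, namely the strong convergence of the averaged traces $H_\varepsilon$ and the limit $\beta(\varepsilon)\int_{l^T_\varepsilon}\sigma(u_\varepsilon)\eta\to 2C_0l_0\int\sigma(H)\eta$. This requires controlling $u_\varepsilon$ on the tiny segments, whose trace is not controlled by $H^1$ alone at critical scale. We would handle it by decomposing $u_\varepsilon$ near $l^j_\varepsilon$ as $u_0(1-w_\varepsilon)+H_\varepsilon w_\varepsilon$ plus a remainder, and showing the remainder's energy vanishes. The test function above with $\psi=u_0$ and $\eta=H$ (after regularization) gives this energy estimate. Once that holds, monotonicity of $\sigma$ handles the rest.
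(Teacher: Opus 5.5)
Your proposal has a genuine gap, located where you place the ``main obstacle'', and the remedy you suggest does not close it.

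The strong compactness in $x_1$ of the averaged traces $H_\varepsilon$ cannot come from the bounds \eqref{3}. In the critical regime the energy bound does not prevent cell-to-cell oscillations of the values on the segments. For instance, take $\sum_{j\in\Upsilon_\varepsilon}(-1)^j w^j_\varepsilon$, built from the segment capacitary potentials, with $N=\#\Upsilon_\varepsilon\approx 2l/\varepsilon$. Its Dirichlet energy is about $N\pi/\ln\bigl(\varepsilon/(2a_\varepsilon l_0)\bigr)\approx 2\pi l/\alpha^2$ and its $\beta(\varepsilon)\Vert\cdot\Vert^2_{L^2(l_\varepsilon)}$ is bounded, yet its averaged trace is $(-1)^j$. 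The $\partial_t$-bound gives compactness in $t$ only.

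Your remedy is circular. You want the decomposition $u_\varepsilon\approx u_0(1-w_\varepsilon)+H_\varepsilon w_\varepsilon$ with vanishing remainder energy, obtained by testing with $\psi=u_0$, $\eta=H$. Writing $\bar u^j_\varepsilon$ for the average of $u_\varepsilon$ on $l^j_\varepsilon$, one has $\Vert\sum_j(\bar u^j_\varepsilon-H(P^j_\varepsilon))\nabla w^j_\varepsilon\Vert^2_{L^2}\approx\mathcal{M}\Vert H_\varepsilon-H\Vert^2_{L^2(\Gamma_2)}$. So that energy estimate is equivalent to the strong convergence it is meant to prove, and it also takes the limit inequalities as input.

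Independently, passing to the limit in $-\int_{Q^T}|\nabla u_\varepsilon|^2$ by weak lower semicontinuity discards exactly the capacitary energy $\mathcal{M}\int_{\Gamma_2^T}(u_0-H)^2$ concentrated near the segments. What you actually obtain contains $\mathcal{M}\int_{\Gamma_2^T}(H-u_0)(\eta-\psi)$ instead of $\mathcal{M}\int_{\Gamma_2^T}(H-u_0)\bigl((\eta-H)-(\psi-u_0)\bigr)$. That inequality holds trivially at $(\psi,\eta)=(u_0,H)$ and does not imply your two displayed inequalities.

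The paper needs no convergence at all of $u_\varepsilon$ on $l_\varepsilon$. It applies monotonicity to every term, including the gradient term, to get \eqref{36}, in which $u_\varepsilon$ enters only linearly. It then tests with $\phi-Q_{\varepsilon,\phi}$, whose value on $l^j_\varepsilon$ is $H^j_{\phi,\varepsilon}(t)$, the solution of the cell obstacle problem \eqref{25} driven by $\mathcal{L}\phi(P^j_\varepsilon,t)$ with initial value $u^0(P^j_\varepsilon)$. Lemma~3 converts the flux through $\partial T^j_{a_\varepsilon}$ into $\mathcal{L}$ times the segment integral. After that, all $l_\varepsilon$-terms combine into $\mathcal{P}_\varepsilon\le 0$, by the complementarity in \eqref{25} together with $u_\varepsilon\ge 0$, and are simply dropped. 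The limit \eqref{49} is then a Minty inequality for $u\mapsto-\Delta u$ with boundary operator $\mathcal{M}(u-H_u)$, which is monotone and hemicontinuous by \eqref{24}--\eqref{24_2}; Minty's lemma identifies $u_0$.

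Your two-field route can be repaired in the same spirit. Use $\int\nabla u_\varepsilon\nabla(v_\varepsilon-u_\varepsilon)\le\int\nabla v_\varepsilon\nabla(v_\varepsilon-u_\varepsilon)$ with $v_\varepsilon=\psi(1-w_\varepsilon)+w_\varepsilon\eta$. Then $|\nabla v_\varepsilon|^2$ produces $\mathcal{M}\int_{\Gamma_2^T}(\eta-\psi)^2$ and every term is linear in $u_\varepsilon$, so weak convergence of $H_\varepsilon$ suffices. Minty's lemma for the coupled monotone system in $(u_0,H)$ then yields your two inequalities.
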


\begin{remark}
\textit{\textrm{In addition, if $u\in H^{1}(0,T;L^{2}(\Omega))\cap
L^{2}(0,T;H^{1}_{0}(\Omega))$, then, $u(x,0)$ is given as the unique
solution of the stationary problem 
\begin{equation*}
\left \{%
\begin{array}{lr}
-\Delta{u}(x,0)=f(x,0), & x\in \Omega, \\ 
u(x,0)=0, & x\in \Gamma_{1}, \\ 
\partial_{x_{2}}u(x,0)=\mathcal{M}(u(x,0)-u^{0}(x)), & x\in \Gamma_{2}.%
\end{array}%
\right.
\end{equation*}
}}
\end{remark}

\begin{remark}
\textit{\textrm{As we can see, a new ``strange term'' has appeared in the
boundary condition of the homogenized problem. This new term is a non-local
nonlinear operator that requires solving an obstacle problem for the
ordinary differential operator to determine it 
\begin{equation}  \label{14}
\left \{%
\begin{array}{lr}
\frac{d}{dt}H_{\phi}+\mathcal{L}H_{\phi} + \sigma(H_{\phi})\ge \mathcal{L}%
\phi, &  \\ 
H_{\phi}\ge 0, & t\in (0,T), \\ 
H_{\phi}(\frac{d}{dt}H_{\phi}+\mathcal{L}H_{\phi} + \sigma(H_{\phi})- 
\mathcal{L}\phi)=0, &  \\ 
H_{\phi}(0)=u^{0}. & 
\end{array}%
\right.
\end{equation}
}}

\textit{\textrm{The function $H_{\phi}$ is a weak solution of the problem %
\eqref{14}, if $H_{\phi}\in H^{1}(0,T)$, $H_{\phi}(0)=u^{0}\ge 0,$ $%
H_{\phi}(t)\ge 0$ for any $t\in [0,T]$ and for an arbitrary function $\psi
\in L^{2}(0,T)$, $\psi(t)\ge 0$, $t\in (0,T)$, it satisfies the variational
inequality 
\begin{gather}
\int \limits_{0}^{T}\frac{d}{dt}H_{\phi}(\psi-H_{\phi})dt+\mathcal{L}\int
\limits_{0}^{T}H_{\phi}(\psi-H_{\phi})dt+\int
\limits_{0}^{T}\sigma(H_{\phi})(\psi-H_{\phi})dt\ge  \notag \\
\ge \mathcal{L}\int \limits_{0}^{T}\phi(\psi-H_{\phi})dt.  \label{15}
\end{gather}
}}

\textit{\textrm{It is easy to show that problem \eqref{14} has the unique
weak solution. Indeed, if we denote by $H_{1,\phi}$ and $H_{2,\phi}$ two
weak solutions of the problem \eqref{14}, then taking $\psi=\frac{1}{2}%
(H_{1,\phi} + H_{2,\phi})$ as a test-function in \eqref{15}, we get 
\begin{gather}
\int \limits_{0}^{T}\frac{d}{dt}(H_{2,\phi}-H_{1,\phi})(H_{1,\phi}-H_{2,%
\phi})dt + \mathcal{L}\int
\limits_{0}^{T}(H_{2,\phi}-H_{1,\phi})(H_{1,\phi}-H_{2,\phi})dt +  \notag \\
+\int
\limits_{0}^{T}(\sigma(H_{2,\phi})-\sigma(H_{1,\phi}))(H_{1,\phi}-H_{2,%
\phi})dt\ge 0.  \label{16}
\end{gather}
Evidently, the left-hand side has a non-positive value. So, we conclude that 
$H_{1,\phi}=H_{2,\phi}$. To prove the existence of a weak solution of the
problem \eqref{14}, we use the penalized method. Consider the problem 
\begin{equation}  \label{17}
\left \{%
\begin{array}{lr}
\frac{d}{dt}H_{\phi,\delta}+\mathcal{L}H_{\phi,\delta}+\sigma(H_{\phi,%
\delta}) + \delta^{-1}H_{\phi,\delta}^{-}=\mathcal{L}\phi, & t\in (0,T), \\ 
H_{\phi,\delta}(0)=u^{0}, & 
\end{array}%
\right.
\end{equation}
where $\delta>0$, $g^{+}(t)=\sup \{0, g(t)\}$, $g^{-} = g - g^{+}$. By the
solution of the problem \eqref{15}, we consider a function $%
H_{\phi,\delta}\in H^{1}(0,T)$, $H_{\phi,\delta}(0)=u^{0}$, that satisfies
the following identity 
\begin{equation}  \label{18}
\int \limits_{o}^{T}\frac{d}{dt}H_{\phi,\delta}\psi{dt}+\mathcal{L}\int
\limits_{0}^{T}H_{\phi,\delta}\psi{dt}+ \int
\limits_{0}^{T}\sigma(H_{\phi,\delta})\psi{dt}+ \delta^{-1}\int
\limits_{0}^{T}H_{\phi,\delta}^{-}\psi{dt}=\mathcal{L}\int
\limits_{0}^{T}\phi \psi{dt},
\end{equation}
for an arbitrary function $\psi \in L^{2}(0, T)$. }}

\textit{\textrm{Taking $\psi=H_{\phi,\delta}$ as a test function in %
\eqref{18}, we obtain 
\begin{equation}  \label{19}
\max \limits_{[0,T]}H^{2}_{\phi,\delta}+\Vert{H_{\phi,\delta}}%
\Vert^{2}_{L^{2}(0,T)}+\delta^{-1}\Vert{H_{\phi,\delta}}^{-}%
\Vert^{2}_{L^{2}(0,T)}\le C(\Vert \phi \Vert^{2}_{L^{2}(0,T)}+(u^{0})^{2}),
\end{equation}
From the equation of the problem \eqref{17} and the estimate \eqref{19}, we
conclude 
\begin{equation}  \label{20}
\Vert \partial_{t}H_{\phi,\delta}\Vert^{2}_{L^{2}(0,T)}\le K(\Vert \phi
\Vert^{2}_{L^{2}(0,T)}+(u^{0})^{2}),
\end{equation}
where $K$ and $C$ do not depend on $\varepsilon$ and $\delta$. Indeed, we
have }}

\textit{\textrm{%
\begin{equation*}
\int \limits_{0}^{T}(\frac{d}{dt}H_{\phi,\delta})^{2}dt+\mathcal{L}^{2}\int
\limits_{0}^{T}H^{2}_{\phi,\delta}dt+\int
\limits_{0}^{T}\sigma^{2}(H_{\phi,\delta})dt+ \delta^{-2}\int
\limits_{0}^{T}(H^{-}_{\phi,\delta})^{2}dt+\newline
\end{equation*}
\begin{equation*}
+2\mathcal{L}\int \limits_{0}^{T}H^{^{\prime
}}_{\phi,\delta}H_{\phi,\delta}dt+2\int \limits_{0}^{T}H^{^{\prime
}}_{\phi,\delta}\sigma(H_{\phi,\delta})dt+ 2\delta^{-1}\int
\limits_{0}^{T}H^{^{\prime }}_{\phi,\delta}H^{-}_{\phi,\delta}dt+ 2\mathcal{L%
}\delta^{-1}\int \limits_{0}^{T}H_{\phi,\delta}H^{-}_{\phi \delta}dt+\newline
\end{equation*}
\begin{equation*}
+2\mathcal{L}\int
\limits_{0}^{T}H_{\phi,\delta}\sigma(H_{\phi,\delta})dt+2\delta^{-1}\int
\limits_{0}^{T}\sigma(H_{\phi,\delta})H^{-}_{ \phi,\delta}dt = \mathcal{L}%
^{2}\int \limits_{0}^{T}\phi^{2}dt.\newline
\end{equation*}
From here, taking into account that 
\begin{gather*}
2\delta^{-1}\int \limits_{0}^{T}H^{^{\prime
}}_{\phi,\delta}H^{-}_{\phi,\delta}dt=\delta^{-1}(H^{-}_{\phi,%
\delta}(T))^{2}\ge 0, \\
2\delta^{-1}\int
\limits_{0}^{T}\sigma(H_{\phi,\delta})H^{-}_{\phi,\delta}=2\delta^{-1}\int%
\limits^T_0\sigma(H^{-}_{\phi,\delta})H^{-}_{\phi,\delta}{dt}\ge 0,\newline
\end{gather*}
and using Cauchy inequality $ab\le \alpha{a}^{2}+C_{\alpha}b^{2}$ for any $%
\alpha>0$ and \eqref{19}, we get \eqref{20}. }}

\textit{\textrm{Then, using the estimates \eqref{19} and \eqref{20}, we
derive, as $\delta \to 0$, convergences 
\begin{gather}
H_{\phi,\delta}\rightharpoonup H_{\phi}\, \, \mbox{weakly in }%
\,H^{1}(0,T),\, \,H_{\phi,\delta}\to H_{\phi}\, \, 
\mbox{ uniformly with
respect to }\,t\in [0,T],  \notag \\
H_{\phi,\delta}^{-}\to 0=H^{-}_{\phi}\, \, \mbox{in}\, \, L^{2}(0,T),\, \, 
\frac{d}{dt}H_{\phi,\delta}\rightharpoonup \frac{d}{dt}H_{\phi}\, \, %
\mbox{weakly in}\, \,L^{2}(0,T).  \label{21}
\end{gather}
From \eqref{19}--\eqref{21}, we obtain the estimates 
\begin{equation}  \label{22}
\max \limits_{[0,T]}|H_{\phi}|\le K(\Vert \phi
\Vert_{L^{2}(0,T)}+u^{0}),\quad \Vert{H}_{\phi}\Vert_{H^{1}(0,T)}\le K(\Vert
\phi \Vert_{L^{2}(0,T)}+u^{0}).
\end{equation}
Using $\psi=v - H_{\phi,\delta}$, where $v\ge 0$ for $t\in [0,T]$, we get 
\begin{gather}
\int \limits_{0}^{T}\frac{d}{dt}H_{\phi,\delta}(v-H_{\phi,\delta})dt + 
\mathcal{L}\int \limits_{0}^{T}H_{\phi,\delta}(v-H_{\phi,\delta})dt + \int
\limits_{0}^{T}\sigma(H_{\phi,\delta})(v-H_{\phi,\delta})dt +  \notag \\
+\delta^{-1}\int \limits_{0}^{T}H_{\phi,\delta}^{-}(v-H_{\phi,\delta})dt=%
\mathcal{L}\int \limits_{0}^{T}\phi(v-H_{\phi,\delta})dt.  \label{23}
\end{gather}
}}

\textit{\textrm{Taking into account that 
\begin{equation*}
\delta^{-1}\int \limits_{0}^{T}H^{-}_{\phi,\delta}(v-H_{\phi,\delta})dt\le 0,
\end{equation*}
and using convergences \eqref{21}, we derive the inequality \eqref{15}. }}

\textit{\textrm{We denote by $\mathbf{H}$ the operator from $%
L^{2}(0,T)\times \mathbb{R}^{+}$ to $L^{2}(0,T)$ that maps a function $\phi$
and nonnegative number $u^{0}$ to a solution $H_{\phi,u^{0}}(t)$ of the
problem \eqref{14}. Then, we have }}

\begin{theorem}
\textit{\textrm{The operator $\mathbf{H}$, defined by the equality $\mathbf{H%
}(\phi,u^{0})=H_{\phi,u^{0}}(t)$ satisfies the following inequalities: let $%
\phi_{1},\, \phi_{2} \in L^{2}(0,T)$, and $u^{0}_{1},u^{0}_{2}\ge 0$, then 
\begin{equation}  \label{24}
\max \limits_{[0,T]}|H_{\phi_{1}, u^{0}_{1}}-H_{\phi_{2},u^{0}_{2}}|\le
K(|u^{0}_{1}-u^{0}_{2}|+\Vert \phi_{1}-\phi_{2}\Vert_{L^{2}(0,T)}),
\end{equation}
\begin{gather}
\mathcal{L}\int
\limits_{0}^{T}(\phi_{1}-\phi_{2})(H_{\phi_{1},u^{0}_{1}}-H_{%
\phi_{2},u^{0}_{2}})dt+\frac{1}{2}(u^{0}_{1}-u^{0}_{2})^{2}\ge  \notag \\
\ge \frac{1}{2}(H_{\phi_{1},u^{0}_{1}}(T)-H_{\phi_{2},u^{0}_{2}}(T))^{2}+%
\mathcal{L}\int
\limits_{0}^{T}(H_{\phi_{1},u^{0}_{1}}-H_{\phi_{2},u^{0}_{2}})^{2}dt.
\label{24_2}
\end{gather}
}}
\end{theorem}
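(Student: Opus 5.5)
The plan is to compare the two solutions through the variational inequality \eqref{15}, taking each one as the test function in the inequality for the other. For $i=1,2$ write $H_i=H_{\phi_i,u^0_i}$. Both lie in $H^1(0,T)$ and are nonnegative, so $\psi=H_2$ is admissible in \eqref{15} for $H_1$, and $\psi=H_1$ is admissible for $H_2$. Adding the two inequalities and setting $w=H_1-H_2$ gives
\begin{equation*}
-\int_0^T w'\,w\,dt-\mathcal{L}\int_0^T w^2\,dt-\int_0^T(\sigma(H_1)-\sigma(H_2))\,w\,dt\ge -\mathcal{L}\int_0^T(\phi_1-\phi_2)\,w\,dt .
\end{equation*}
Since $w\in H^1(0,T)$, we have $\int_0^T w'w\,dt=\tfrac12 w(T)^2-\tfrac12(u^0_1-u^0_2)^2$. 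Monotonicity of $\sigma$ ($\sigma'\ge k_1>0$) makes the $\sigma$-term nonnegative, so it can be dropped. Rearranging then yields exactly \eqref{24_2}.

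To get \eqref{24} I would redo the same argument on a subinterval $(0,s)$ for every $s\in(0,T]$. This needs the variational inequality on $(0,s)$, and the cleanest way to obtain it is from the penalized problems \eqref{17}, which hold pointwise a.e. in $t$. Subtracting the penalized equations for $H_{1,\delta}$ and $H_{2,\delta}$ and multiplying by $w_\delta=H_{1,\delta}-H_{2,\delta}$, the terms $\delta^{-1}(H_{1,\delta}^--H_{2,\delta}^-)w_\delta$ and $(\sigma(H_{1,\delta})-\sigma(H_{2,\delta}))w_\delta$ are nonnegative, because $r\mapsto r^-$ and $\sigma$ are both monotone nondecreasing. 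Integrating over $(0,s)$ gives
\begin{equation*}
\tfrac12 w_\delta(s)^2+\mathcal{L}\int_0^s w_\delta^2\,dt\le \tfrac12 (u^0_1-u^0_2)^2+\mathcal{L}\int_0^s(\phi_1-\phi_2)\,w_\delta\,dt .
\end{equation*}
By Cauchy's inequality, $\mathcal{L}\,|(\phi_1-\phi_2)w_\delta|\le \mathcal{L}w_\delta^2+\tfrac{\mathcal{L}}{4}(\phi_1-\phi_2)^2$, so the right-hand $w_\delta$ term is absorbed into the left. This gives $w_\delta(s)^2\le (u^0_1-u^0_2)^2+\tfrac{\mathcal{L}}{2}\|\phi_1-\phi_2\|^2_{L^2(0,T)}$ uniformly in $s$. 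Letting $\delta\to0$ with the uniform convergence in \eqref{21} and taking square roots gives \eqref{24}.

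The main technical point is justifying the limit passage on subintervals. Alternatively, the pointwise-in-time bound can be obtained from the variational inequality directly by testing with truncated functions such as $\psi=H_2\chi_{(0,s)}+H_1\chi_{(s,T)}$. This test function is admissible because \eqref{15} only requires $\psi\in L^2$ and $\psi\ge0$. It localizes the sum of the two inequalities to $(0,s)$, so the computation above then applies verbatim. Either route closes the argument. I would present the truncation route, since it avoids revisiting the penalization and uses only \eqref{15}.
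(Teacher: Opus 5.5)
Your proposal is correct and follows essentially the paper's route. The paper tests both variational inequalities with the midpoint $\frac12(H_{\phi_1,u^0_1}+H_{\phi_2,u^0_2})$ instead of cross-testing with the other solution, but after summing this gives the same inequality up to a factor $\frac12$, and \eqref{24_2} then follows exactly as you describe. The paper only asserts that \eqref{24} also follows; your localization to $(0,s)$ with the admissible truncated test function $H_2\chi_{(0,s)}+H_1\chi_{(s,T)}$ (or via the penalized problems) supplies the step that turns the endpoint estimate at $T$ into the uniform bound, so your argument is more complete than the paper's.
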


\begin{proof}
\textit{\textrm{We take $v=\frac{1}{2}(H_{\phi_{1},u^{0}_{1}} +
H_{\phi_{2},u^{0}_{2}})$ as a test function in variational inequalities for
functions $H_{\phi_{1},u^{0}_{1}}(t)$, $H_{\phi_{2},u^{0}_{2}}(t)$. Then, we
sum the obtained expressions and get 
\begin{gather*}
\int \limits_{0}^{T}\frac{d}{dt}(H_{\phi_{1},u^{0}_{1}}-H_{%
\phi_{2},u^{0}_{2}})(H_{\phi_{1},u^{0}_{1}}-H_{\phi_{2},u^{0}_{2}})dt+%
\mathcal{L}\int
\limits_{0}^{T}(H_{\phi_{1},u^{0}_{1}}-H_{\phi_{2},u^{0}_{2}})^{2}dt+  \notag
\\
+\int
\limits_{0}^{T}(\sigma(H_{\phi_{1},u^{0}_{1}})-\sigma(H_{%
\phi_{2},u^{0}_{2}}))(H_{\phi_{1},u^{0}_{1}}-H_{\phi_{2},u^{0}_{2}})dt\le 
\mathcal{L}\int
\limits_{0}^{T}(\phi_{1}-\phi_{2})(H_{\phi_{1},u^{0}_{1}}-H_{%
\phi_{2},u^{0}_{2}})dt.
\end{gather*}
From these inequalities, we deduce \eqref{24}, \eqref{24_2}. }}
\end{proof}
\end{remark}

\subsection{\textit{\textrm{Proof of the Main Theorem for the initial problem%
}}}

\textit{\textrm{We define a set of obstacle problems 
\begin{equation}  \label{25}
\left \{%
\begin{array}{lr}
\frac{d}{dt}H^{j}_{\phi,\varepsilon}+\mathcal{L}H^{j}_{\phi,\varepsilon}+%
\sigma(H^{j}_{\phi,\varepsilon})\ge \mathcal{L}\phi(P^{j}_{\varepsilon},t),
\, \,H^{j}_{\phi,\varepsilon}(t)\ge 0, & t\in (0,T), \\ 
(\frac{d}{dt}H^{j}_{\phi,\varepsilon}+\mathcal{L}H^{j}_{\phi,\varepsilon}+%
\sigma(H^{j}_{\phi,\varepsilon})-\mathcal{L}\phi(P^{j}_{%
\varepsilon},t))H^{j}_{\phi,\varepsilon}=0,\, & t\in (0,T), \\ 
H^{j}_{\phi,\varepsilon}(0)=u^{0}(P^{j}_{\varepsilon}), & 
\end{array}%
\right.
\end{equation}
where $j\in \Upsilon_{\varepsilon}$, $P^{j}_{\varepsilon}={\varepsilon}j$, $%
\phi(x,t)=\psi(x)\eta(t)$, $\psi \in C^{\infty}(\overline \Omega,\Gamma_{1})$%
, $\eta \in C^{1}[0,T]$. }}

\textit{\textrm{We denote by $T^{j}_{r}$ the ball of radius $r$ with the
center in $P^{j}_{\varepsilon}={\varepsilon}j$ and $%
(T^{j}_{r})^{+}=T^{j}_{r}\cap \{x_{2}>0\}$. Consider auxiliary functions $%
w^{j}_{\varepsilon}$ and $q^{j}_{\varepsilon}$ which are solutions of the
problems 
\begin{equation}  \label{26}
\Delta{w}^{j}_{\varepsilon}=0,\,x\in T^{j}_{\varepsilon/4}\setminus 
\overline{T^{j}_{a_{\varepsilon}}},\, \,w^{j}_{\varepsilon}=1,\,x\in \partial%
{T}^{j}_{a_{\varepsilon}},\,w^{j}_{\varepsilon}=0, \,x\in \partial{T}%
^{j}_{\varepsilon/4},
\end{equation}
and 
\begin{equation}  \label{27}
\Delta{q}^{j}_{\varepsilon}=0,\, \,x\in T^{j}_{{\varepsilon}/4}\setminus%
\overline{l^{j}_{\varepsilon}},\, \,q^{j}_{\varepsilon}=1, \,x\in
l^{j}_{\varepsilon},\, \,q^{j}_{\varepsilon}=0,\, \,x\in \partial{%
T^{j}_{\varepsilon/4}}.
\end{equation}
Note that $w^{j}_{\varepsilon}$ and $q^{j}_{\varepsilon}$ are solutions of
the problems 
\begin{equation}  \label{28}
\left \{%
\begin{array}{lr}
\Delta{w}^{j}_{\varepsilon}=0,\, & x\in (T^{j}_{\varepsilon/4})^{+}\setminus%
\overline{T^{j}_{a_{\varepsilon}}}, \\ 
w^{j}_{\varepsilon}=0,\, & x\in (\partial{T^{j}_{\varepsilon/4}})^{+}, \\ 
w^{j}_{\varepsilon}=1,\, & x\in (\partial{T^{j}_{a_{\varepsilon}}})^{+}, \\ 
\partial_{x_{2}}w^{j}_{\varepsilon}=0, & x\in \{x_{2}=0\}
\cap(T^{j}_{\varepsilon/4}\setminus \overline{T^{j}_{a_{\varepsilon}}}),%
\end{array}%
\right.
\end{equation}
and 
\begin{equation}  \label{29}
\left \{%
\begin{array}{lr}
\Delta{q}^{j}_{\varepsilon}=0, & x\in (T^{j}_{\varepsilon/4})^{+}, \\ 
q^{j}_{\varepsilon}=0, & x\in (\partial{T}^{j}_{\varepsilon/4})^{+}, \\ 
q^{j}_{\varepsilon}=1, & x\in l^{j}_{\varepsilon}, \\ 
\partial_{x_{2}}q^{j}_{\varepsilon}=0, & x\in (T^{j}_{\varepsilon/4}\cap
\{x_{2}=0\})\setminus \overline{l^{j}_{\varepsilon}},%
\end{array}%
\right.
\end{equation}
where $j\in \Upsilon_{\varepsilon}$. It is easy to see that 
\begin{equation*}
w^{j}_{\varepsilon}=\frac{\ln \Bigl(\frac{4r}{\varepsilon}\Bigr)}{\ln \Bigl(%
\frac{4a_{\varepsilon}}{\varepsilon}\Bigr)}.
\end{equation*}
}}

\textit{\textrm{We define the functions $W_{\varepsilon}(x)$ and $%
Q_{\varepsilon}(x)$ by setting 
\begin{equation}  \label{30}
W_{\varepsilon}(x) = \left \{%
\begin{array}{lr}
w^{j}_{\varepsilon}(x), & x\in \Bigl(T^{j}_{\varepsilon/4}\setminus 
\overline{T^{j}_{a_{\varepsilon}}}\Bigr)^{+},\,j\in \Upsilon_{\varepsilon},
\\ 
1, & x\in \overline{(T^{j}_{a_{\varepsilon}})^{+}},\,j\in
\Upsilon_{\varepsilon}, \\ 
0, & x\in \Omega \setminus \overline{\cup_{j\in
\Upsilon_{\varepsilon}}(T^{j}_{\varepsilon/4})^{+}},%
\end{array}%
\right.
\end{equation}
and 
\begin{equation}  \label{31}
Q_{\varepsilon}(x)=\left \{%
\begin{array}{lr}
q^{j}_{\varepsilon}(x), & x\in \Bigl(T^{j}_{\varepsilon/4}\Bigr)^{+},\,j\in
\Upsilon_{\varepsilon}, \\ 
0, & x\in \Omega \setminus{\overline{\cup_{j\in
\Upsilon_{\varepsilon}}(T^{j}_{\varepsilon/4})^{+}}}.%
\end{array}%
\right.
\end{equation}
It is easy to see that $W_{\varepsilon},\, Q_{\varepsilon}\in H^{1}(\Omega,
\Gamma_{1})$ and $W_{\varepsilon}\rightharpoonup 0$ weakly in $%
H^{1}(\Omega,\Gamma_{1})$ as $\varepsilon \to 0$. To compare these two
functions, we will use the following lemma, proved in \cite{D3GCPodSh2018}. }%
}

\begin{lemma}
\textit{\textrm{Let $W_{\varepsilon}$ and $Q_{\varepsilon}$ are the
functions defined by \eqref{30} and \eqref{31} correspondingly. Then, we
have the estimate 
\begin{equation}  \label{32}
\Vert{W_{\varepsilon}-Q_{\varepsilon}}\Vert_{H^{1}(\Omega,\Gamma_{1})}\le K%
\sqrt{\varepsilon}.
\end{equation}
}}
\end{lemma}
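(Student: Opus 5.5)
The plan is to reduce the estimate to a single cell and then sum over $j\in\Upsilon_{\varepsilon}$. Since $W_{\varepsilon}$ and $Q_{\varepsilon}$ both vanish outside $\bigcup_j (T^{j}_{\varepsilon/4})^{+}$ and the half-balls are disjoint, we have
\begin{equation*}
\Vert W_{\varepsilon}-Q_{\varepsilon}\Vert^{2}_{H^{1}(\Omega,\Gamma_{1})}=\sum_{j\in\Upsilon_{\varepsilon}}\Vert w^{j}_{\varepsilon}-q^{j}_{\varepsilon}\Vert^{2}_{H^{1}((T^{j}_{\varepsilon/4})^{+})},
\end{equation*}
where $w^{j}_{\varepsilon}$ is extended by $1$ inside $T^{j}_{a_{\varepsilon}}$. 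There are $O(\varepsilon^{-1})$ cells, so it suffices to prove $\Vert w^{j}_{\varepsilon}-q^{j}_{\varepsilon}\Vert^{2}_{H^{1}}\le K\varepsilon^{2}$ per cell. The $L^{2}$ part then follows from the gradient part by Poincaré's inequality on a half-ball of radius $\varepsilon/4$, because the difference vanishes on the curved boundary $(\partial T^{j}_{\varepsilon/4})^{+}$; this even gives an extra factor $\varepsilon^{2}$.

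For the gradient, rescale $x=P^{j}_{\varepsilon}+\varepsilon y$. The Dirichlet integral is scale invariant in two dimensions, so we work in the half-disc $B_{1/4}^{+}$ with a small segment $(-\rho,\rho)$ on the flat boundary, where $\rho=a_{\varepsilon}l_{0}/\varepsilon=C_{0}l_{0}e^{-\alpha^{2}/\varepsilon}$, and a small half-disc of radius $a_{\varepsilon}/\varepsilon$. Both functions are harmonic, have homogeneous Neumann data on the flat part, and vanish on $|y|=1/4$. Reflecting evenly across $y_{2}=0$ turns them into the capacitary potentials in $B_{1/4}$ of a segment of half-length $\rho$ and of a disc of radius $r=C_{0}e^{-\alpha^{2}/\varepsilon}$. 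Explicit formulas are available: $w=\ln(4|y|)/\ln(4r)$, and $q$ is given by elliptic coordinates, where the logarithmic capacity of a segment of half-length $\rho$ is $\rho/2$. For $|y|\gg\rho$,
\begin{equation*}
q(y)=\frac{\ln(4|y|)+O(\rho^{2}/|y|^{2})}{\ln(2\rho)+O(\rho^{2})}.
\end{equation*}

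The key step is to estimate $\int_{B^{+}_{1/4}}|\nabla(w-q)|^{2}$. I would use the energy identity: since $w-q$ is harmonic away from the inclusions with zero Neumann and outer Dirichlet data, this integral reduces to boundary terms on the inner sets. Alternatively, I would split into the region $|y|<2\rho$ and its complement. Near the inclusion both functions lie in $[0,1]$ with gradients controlled by $1/(|y||\ln\rho|)$, so their energy there is $O(1/|\ln\rho|)=O(\varepsilon)$ and no cancellation is needed. Far away, $\nabla w-\nabla q$ comes from the mismatch of the coefficients $1/\ln(4r)$ and $1/\ln(2\rho)$. Since $\ln(4r)$ and $\ln(2\rho)$ differ by the constant $\ln(2/l_{0})$ while both are of order $\alpha^{2}/\varepsilon$, this mismatch is $O(\varepsilon^{2})$. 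The higher-order corrections contribute $O(\varepsilon^{2})$ as well.

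The main obstacle is getting the per-cell bound down to $O(\varepsilon^{2})$ rather than $O(\varepsilon)$. The crude bound on each function's energy is only $O(1/|\ln\rho|)=O(\varepsilon)$, and summing that over $\varepsilon^{-1}$ cells gives $O(1)$, not $O(\sqrt{\varepsilon})$. So the cancellation between $w$ and $q$ near the inclusion is essential. I would handle it through the Dirichlet principle: $w-q$ minimizes energy among functions with its boundary values, and those values differ on the inner sets by $O(\varepsilon)$ after renormalising each function by its capacity. That yields an energy of $O(\varepsilon^{2}\cdot\varepsilon^{-1})=O(\varepsilon)$ in total, hence $\Vert W_{\varepsilon}-Q_{\varepsilon}\Vert\le K\sqrt{\varepsilon}$, in agreement with the statement of Lemma~1 and with the result of \cite{D3GCPodSh2018}.
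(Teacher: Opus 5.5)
The paper gives no proof of Lemma~1 (it quotes it from the authors' earlier work), so your argument has to stand on its own. Your cell reduction, the Poincaré step and the far-field computation are sound. For $|y|\ge 2r$, the coefficient mismatch $\frac{1}{\ln 4r}-\frac{1}{\ln 2\rho}=\frac{\ln(l_0/2)}{\ln(4r)\ln(2\rho)}=O(\varepsilon^2)$ contributes $O(\varepsilon^4|\ln r|)=O(\varepsilon^3)$ to the cell energy. The correction term has gradient $O(\varepsilon\rho^2|y|^{-3})$ and contributes $O(\varepsilon^2\rho^4/r^4)=O(\varepsilon^2)$.

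\textbf{The gap is in the near field.} There you bound each function's energy by $O(1/|\ln\rho|)=O(\varepsilon)$ and say no cancellation is needed. But $O(\varepsilon)$ per cell sums to $O(1)$ over the $O(\varepsilon^{-1})$ cells, so this step does not prove the lemma. Your last paragraph then reverses itself, declares the cancellation near the inclusion essential, and appeals to a Dirichlet-principle heuristic that is not a proof:
\begin{itemize}
\item $w-q$ is not harmonic across $|y|=r$, where $\partial_\nu w$ jumps. The Dirichlet principle therefore only applies in the annulus $r<|y|<1/4$, with inner data $1-q$.
\item A sup bound $|1-q|=O(\varepsilon)$ on that circle does not by itself bound the energy of the harmonic extension by $O(\varepsilon^2)$. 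You would also need $|\nabla q|\lesssim\varepsilon/r$ there.
\item The region $|y|<r$ is left untreated.
\item ``Renormalising each function by its capacity'' has no meaning for the fixed functions $w$ and $q$.
\end{itemize}

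\textbf{The missing observation is that no cancellation is needed near the inclusion}, because the near zone has bounded conformal size. Since $l_0<1/2$, you have $\rho=l_0r<r/2$, so the split must be at $|y|=2r$. Your region $|y|<2\rho$ lies entirely inside the disc where $w\equiv1$. The far-field expansion of $q$ remains valid for $|y|\ge 2r$, since there $\rho/|y|\le l_0/2$.

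On $B^+_{2r}$ one has $\int|\nabla w|^2=\pi\ln 2/\ln^2(4r)=O(\varepsilon^2)$. For $q$, write $1-q=G(y/\rho)/\ln\frac{1}{2\rho}$, with $G(\zeta)=\ln|\zeta+\sqrt{\zeta^2-1}|$, plus a harmonic correction whose boundary values are $O(\varepsilon\rho^2)$ and whose energy is negligible. Scaling $y=\rho\zeta$ gives
$\int_{B^+_{2r}}|\nabla q|^2\le C\ln^{-2}(2\rho)\int_{|\zeta|<2/l_0}|\zeta^2-1|^{-1}d\zeta=O(\varepsilon^2)$,
because the endpoint singularity is square integrable. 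Incidentally, your pointwise bound $|\nabla q|\lesssim 1/(|y||\ln\rho|)$ is false near the endpoints of the segment.

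So cancellation matters only in the far field, where each function carries its $O(\varepsilon)$ energy, and you already handled that. With this correction the cell energy of $w-q$ is $O(\varepsilon^2)$, the sum over cells is $O(\varepsilon)$, and the estimate $\Vert W_{\varepsilon}-Q_{\varepsilon}\Vert\le K\sqrt{\varepsilon}$ follows.
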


\textit{\textrm{Let's construct two more auxiliary functions 
\begin{equation}  \label{33}
Q_{\varepsilon,\phi}(x,t)=\left \{%
\begin{array}{lr}
q^{j}_{\varepsilon}(x)(\phi(x,t)-H^{j}_{\phi,\varepsilon}(t)), & x\in(
T^{j}_{\varepsilon/4})^{+},\,j\in \Upsilon_{\varepsilon},\,t\in (0,T), \\ 
0, & x\in \Omega \setminus \overline{\cup_{j\in
\Upsilon_{\varepsilon}}T^{j}_{\varepsilon/4}},%
\end{array}%
\right.
\end{equation}
and 
\begin{equation}  \label{34}
W_{\varepsilon,\phi}(x,t)=\left \{%
\begin{array}{lr}
w^{j}_{\varepsilon}(x)(\phi(x,t)-H^{j}_{\phi,\varepsilon}(t)), & x\in \Bigl(%
T^{j}_{\varepsilon/4}\Bigr)^{+}\setminus \overline{T^{j}_{a_{\varepsilon}}}%
,\,j\in \Upsilon_{\varepsilon},\,t\in (0,T), \\ 
\varphi(x, t) - H^j_{\phi, \varepsilon}(t), & x\in \overline{(T^{j}_{a_{\varepsilon}})^{+}}, \,j\in
\Upsilon_{\varepsilon},\,t\in (0,T), \\ 
0, & x\in \Omega \setminus \overline{\bigcup \limits_{j\in
\Upsilon_{\varepsilon}}T^{j}_{\varepsilon/4}},\,t\in (0,T).%
\end{array}%
\right.
\end{equation}
Using the properties of $W_{\varepsilon}$, we have that $W_{\varepsilon,%
\phi}\rightharpoonup 0$ weakly in $L^{2}(0,T; H^{1}(\Omega,\Gamma_{1}))$ as $%
\varepsilon \to 0$. }}

\textit{\textrm{From Lemma~1, we deduce }}

\begin{lemma}
\textit{\textrm{The following estimate is valid 
\begin{equation}  \label{35}
\Vert{W_{\varepsilon,\phi}-Q_{\varepsilon,\phi}}\Vert_{L^{2}(0,T;
H^{1}(\Omega,\Gamma_{1}))}\le K\sqrt{\varepsilon}.
\end{equation}
}}
\end{lemma}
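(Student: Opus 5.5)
The plan is to write the difference cell by cell and reduce it to Lemma~1. On each half-ball $(T^{j}_{\varepsilon/4})^{+}$ set $g^{j}(x,t)=\phi(x,t)-H^{j}_{\phi,\varepsilon}(t)$. Then
\begin{equation*}
W_{\varepsilon,\phi}-Q_{\varepsilon,\phi}=(W_{\varepsilon}-Q_{\varepsilon})\,g^{j}(x,t)\quad\text{in }(T^{j}_{\varepsilon/4})^{+},
\end{equation*}
including inside $\overline{(T^{j}_{a_\varepsilon})^{+}}$, where $W_\varepsilon=1$. Both functions vanish outside $\bigcup_j (T^{j}_{\varepsilon/4})^{+}$. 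The gradient is
\begin{equation*}
\nabla(W_{\varepsilon,\phi}-Q_{\varepsilon,\phi})=g^{j}\,\nabla(W_{\varepsilon}-Q_{\varepsilon})+(W_{\varepsilon}-Q_{\varepsilon})\nabla_x\phi ,
\end{equation*}
because $H^{j}_{\phi,\varepsilon}$ does not depend on $x$. Hence, pointwise in $t$,
\begin{equation*}
\|W_{\varepsilon,\phi}-Q_{\varepsilon,\phi}\|_{H^{1}(\Omega,\Gamma_1)}\le \Bigl(\sup_{j}\|g^{j}(\cdot,t)\|_{L^\infty}+\|\phi(\cdot,t)\|_{W^{1,\infty}}\Bigr)\|W_{\varepsilon}-Q_{\varepsilon}\|_{H^{1}(\Omega,\Gamma_1)}.
\end{equation*}
Squaring, integrating over $(0,T)$ and applying Lemma~1 gives \eqref{35}, provided the bracket is bounded in $L^2(0,T)$, or in $L^\infty(0,T)$, uniformly in $\varepsilon$ and $j$.

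For $\phi$ this is immediate, since $\phi=\psi(x)\eta(t)$ with $\psi$ smooth and $\eta\in C^1[0,T]$. For $H^{j}_{\phi,\varepsilon}$ I will use estimate \eqref{22}, applied to problem \eqref{25} with data $\phi(P^{j}_{\varepsilon},\cdot)$ and initial value $u^{0}(P^{j}_{\varepsilon})$:
\begin{equation*}
\max_{[0,T]}|H^{j}_{\phi,\varepsilon}|\le K\bigl(\|\phi(P^{j}_\varepsilon,\cdot)\|_{L^{2}(0,T)}+u^{0}(P^{j}_\varepsilon)\bigr).
\end{equation*}
Since $u^{0}\in H^{1}_{0}(-l,l)\subset C[-l,l]$ and $\phi$ is bounded, the right-hand side is bounded independently of $j$ and $\varepsilon$. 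As an alternative, the Lipschitz estimate \eqref{24} compares each $H^{j}_{\phi,\varepsilon}$ with the solution for zero data and zero initial value, which is $H\equiv 0$ since $\sigma(0)=0$; this gives the same bound.

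The main obstacle is that the bound must be uniform in $j$. The functions $g^{j}$ differ from cell to cell, so one cannot simply factor out a single global function. This is why the pointwise sup-norm bound, uniform over all cells, is the key step; the $L^{2}$-bound on $\nabla(W_\varepsilon-Q_\varepsilon)$ is then summed over the disjoint supports.

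A minor point concerns the piece of $W_{\varepsilon,\phi}$ inside $T^{j}_{a_\varepsilon}$, where $W_\varepsilon=1$ and $W_{\varepsilon,\phi}=g^{j}$. There I need $H^{1}$-regularity across $\partial T^{j}_{a_\varepsilon}$. It holds because $w^{j}_\varepsilon=1$ on that circle, so the product formula remains valid globally in $H^{1}$.
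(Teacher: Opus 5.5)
Your proof is correct. It is the deduction the paper intends when it says Lemma~2 follows ``from Lemma~1'': on each half-ball the difference factors as $(W_{\varepsilon}-Q_{\varepsilon})(\phi-H^{j}_{\phi,\varepsilon})$, and the $j$- and $\varepsilon$-uniform bound on $H^{j}_{\phi,\varepsilon}$ from \eqref{22} (using $u^{0}\in C[-l,l]$ and $\phi=\psi\eta$ smooth) carries the $\sqrt{\varepsilon}$ rate from \eqref{32} over to \eqref{35}. The paper gives no further argument, so your write-up supplies the missing details.
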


\textit{\textrm{From the variational inequality \eqref{2}, using the
monotonicity of $\sigma$, we have 
\begin{gather}
\beta(\varepsilon)\int
\limits_{l^{T}_{\varepsilon}}\partial_{t}\psi(\psi-u_{\varepsilon})dx_{1}dt+%
\int \limits_{Q^{T}}\nabla \psi \nabla(\psi-u_{\varepsilon})dx{dt}+
\beta(\varepsilon)\int
\limits_{l^{T}_{\varepsilon}}\sigma(\psi)(\psi-u_{\varepsilon})dx_{1}dt\ge 
\notag \\
\ge \int \limits_{Q^{T}}f(\psi-u_{\varepsilon})dx{dt}-\frac{%
\beta(\varepsilon)}{2}\Vert
\psi(x,0)-u^{0}(x)\Vert^{2}_{L^{2}(l_{\varepsilon})},  \label{36}
\end{gather}
where $\psi \in \mathbb{K}_{\varepsilon}$. }}

\textit{\textrm{Let us take in \eqref{36} as a test function $\psi=\phi(x,t)
- Q_{\varepsilon,\phi}(x,t)$, where $\phi$ is an arbitrary function from $%
L^{2}(0,T; H^{1}(\Omega,\Gamma_{1}))$. Note, that $(\phi(x,t) -
Q_{\varepsilon,\phi}(x,t))\Bigl|_{x\in l^{j}_{\varepsilon},t\in (0,T)} =
\phi(x,t) -\phi(x,t)+H^{j}_{\phi,\varepsilon}=H^{j}_{\phi,\varepsilon}(t)\ge
0$, hence, $\psi \in \mathbb{K}_\varepsilon$. Also, we have $%
\psi(x,0)-u^{0}(x)=\phi(x,0)-\phi(x,0)+H^{j}_{\phi,%
\varepsilon}(0)-u^{0}(x)=u^{0}(P^{j}_{\varepsilon})-u^{0}(x)$, $x\in
l^{j}_{\varepsilon}$. Thus, from the inequality \eqref{36}, we derive 
\begin{gather}
\beta(\varepsilon)\int
\limits_{l^{T}_{\varepsilon}}\partial_{t}(\phi-Q_{\varepsilon,\phi})(%
\phi-Q_{\varepsilon,\phi}-u_{\varepsilon})dx_{1}dt+\int
\limits_{Q^{T}}\nabla(\phi-Q_{\varepsilon,\phi})\nabla(\phi-Q_{\varepsilon,%
\phi}-u_{\varepsilon})dx{dt} +  \notag \\
+\beta(\varepsilon)\int
\limits_{l^{T}_{\varepsilon}}\sigma(\phi-Q_{\varepsilon,\phi})(\phi-Q_{%
\varepsilon,\phi}-u_{\varepsilon})dx_{1}dt\ge \int
\limits_{Q^{T}}f(\phi-Q_{\varepsilon,\phi}-u_{\varepsilon})dx{dt}%
+\alpha_{\varepsilon},  \label{37}
\end{gather}
where $\alpha_{\varepsilon}\to 0$ as $\varepsilon \to 0$. Again, using that $%
(\phi(x,t) - Q_{\varepsilon,\phi}(x,t))\Bigl|_{x\in l^{j}_{\varepsilon},t\in
(0,T)} = H^j_{\phi, \varepsilon}$, from \eqref{37}, we conclude 
\begin{gather}
\beta(\varepsilon)\sum \limits_{j\in \Upsilon_{\varepsilon}}\int
\limits_{0}^{T}\int
\limits_{l^{j}_{\varepsilon}}\partial_{t}H^{j}_{\varepsilon,\phi}(H^{j}_{%
\varepsilon,\phi}-u_{\varepsilon})dx_{1}dt+ \int \limits_{Q^{T}}\nabla \phi
\nabla(\phi-Q_{\varepsilon,\phi}-u_{\varepsilon})dx{dt} -  \notag \\
-\int \limits_{Q^{T}}\nabla{Q_{\varepsilon,\phi}}\nabla(\phi-Q_{\varepsilon,%
\phi}-u_{\varepsilon})dx{dt}+ \beta(\varepsilon)\sum \limits_{j\in
\Upsilon_{\varepsilon}}\int \limits_{0}^{T}\int
\limits_{l^{j}_{\varepsilon}}\sigma(H^{j}_{\varepsilon,\phi})(H^{j}_{%
\varepsilon,\phi}-u_{\varepsilon})dx_{1}dt\ge  \notag \\
\ge \int \limits_{Q^{T}}f(\phi-Q_{\varepsilon,\phi} - u_{\varepsilon})dx{dt}%
+\alpha{\varepsilon}.  \label{38}
\end{gather}
}}

\textit{\textrm{Using Lemma~2, we have 
\begin{equation}  \label{39}
\lim \limits_{\varepsilon \to 0}\int
\limits_{Q^{T}}\nabla(Q_{\varepsilon,\phi}-W_{\varepsilon,\phi})\nabla(%
\phi-Q_{\varepsilon,\phi}-u_{\varepsilon})dx{dt}=0.
\end{equation}
}}

\textit{\textrm{Let us consider the following integral 
\begin{equation*}
J_{\varepsilon}\equiv \int \limits_{Q^{T}}\nabla{W_{\varepsilon,\phi}}%
\nabla(\phi-Q_{\varepsilon,\phi}-u_{\varepsilon})dx{dt}.
\end{equation*}
}}

\textit{\textrm{We have 
\begin{gather}
J_{\varepsilon}=\sum \limits_{j\in \Upsilon_{\varepsilon}}\int
\limits_{0}^{T}\int \limits_{(T^{j}_{\varepsilon/4})^{+}\setminus \overline{%
(T^{j}_{a_{\varepsilon}})^{+}}}\nabla({w}^{j}_{\varepsilon}(%
\phi(x,t)-H^{j}_{\varepsilon}(t)))\nabla(\phi-Q_{\varepsilon,\phi}-u_{%
\varepsilon})dx{dt} =  \notag \\
=\sum \limits_{j\in \Upsilon_{\varepsilon}}\int \limits_{0}^{T}\int
\limits_{(T^{j}_{\varepsilon/4})^{+}\setminus \overline{(T^{j}_{a_{%
\varepsilon}})^{+}}}\nabla{w}^{j}_{\varepsilon}\nabla((\phi-H^{j}_{%
\varepsilon})(\phi-Q_{\varepsilon,\phi}-u_{\varepsilon}))dx{dt}%
+\alpha_{\varepsilon}=  \notag \\
=\sum \limits_{j\in \Upsilon_{\varepsilon}}\int \limits_{0}^{T}\int
\limits_{(\partial{T}^{j}_{\varepsilon/4})^{+}}\partial_{\nu}w^{j}_{%
\varepsilon}(\phi(x,t)-H^{j}_{\varepsilon}(t))(\phi-Q_{\varepsilon,\phi}-u_{%
\varepsilon})ds{dt} +  \notag \\
+\sum \limits_{j\in \Upsilon_{\varepsilon}}\int \limits_{(\partial{T}%
^{j}_{a_{\varepsilon}})^{+}}\partial_{\nu}w^{j}_{\varepsilon}(%
\phi(x,t)-H^{j}_{\varepsilon})(\phi-Q_{\varepsilon,\phi}-u_{\varepsilon})ds{%
dt}+\alpha_{\varepsilon}.  \label{40}
\end{gather}
}}

\textit{\textrm{Taking into account that $\partial_{\nu}w^{j}_{\varepsilon}%
\Bigl|_{\partial{T^{j}_{\varepsilon/4}}}=\frac{4}{-\alpha^{2}+{\varepsilon}%
\ln(4C_{0})}$ and $\partial_{\nu}w^{j}_{\varepsilon}\Bigl|_{\partial{%
T^{j}_{a_{\varepsilon}}}}= \frac{\exp(\alpha^{2}/\varepsilon)}{%
C_{0}\alpha^{2}-C_{0}{\varepsilon}\ln(4C_{0})}$, we transform relation %
\eqref{40} into 
\begin{gather}
J_{\varepsilon}=-\frac{4}{\alpha^{2}}\sum \limits_{j\in
\Upsilon_{\varepsilon}}\int \limits_{0}^{T}\int \limits_{(\partial{T}%
^{j}_{\varepsilon/4})^{+}}(\phi(x,t)-H^{j}_{\varepsilon}(t))(\phi-u_{%
\varepsilon})ds{dt} +  \notag \\
+\frac{\beta(\varepsilon)}{C_{0}\alpha^{2}}\sum \limits_{j\in
\Upsilon_{\varepsilon}}\int \limits_{(\partial{T^{j}_{a_{\varepsilon}}}%
)^{+}}(\phi(x,t)-H^{j}_{\varepsilon,\phi}(t))(\phi - Q_{\varepsilon,\phi} -
u_{\varepsilon})ds{dt} + \alpha_{\varepsilon},  \label{41}
\end{gather}
where $\alpha_{\varepsilon}\to 0$ as $\varepsilon \to 0$. }}

\textit{\textrm{In what follows, we will need the following lemma of \cite%
{D3GCPodSh2018}. }}

\begin{lemma}
\textit{\textrm{Let $h\in H^{1}(\Omega, \Gamma_{1})$. Then 
\begin{equation}  \label{42}
\Bigl|\frac{\beta(\varepsilon)\pi}{2l_{0}}\int \limits_{l_{\varepsilon}}h{%
dx_{1}}-\beta(\varepsilon)\sum \limits_{j\in \Upsilon_{\varepsilon}}\int
\limits_{(\partial{T^{j}_{a_{\varepsilon}}})^{+}}h{ds}\Bigr|\le K\sqrt{%
\varepsilon}\Vert{h}\Vert_{H^{1}(\Omega,\Gamma_{1})}.
\end{equation}
}}
\end{lemma}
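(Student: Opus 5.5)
The plan is to compare the two boundary integrals cell by cell through a Neumann cell problem posed in the small half-disk $(T^{j}_{a_{\varepsilon}})^{+}$, and then sum the per-cell errors with the Cauchy--Schwarz inequality. Two facts set this up. Since $l_{0}<1/2<1$, the segment $l^{j}_{\varepsilon}=a_{\varepsilon}\hat l_{0}+\varepsilon j$ lies on the flat part of $\partial (T^{j}_{a_{\varepsilon}})^{+}$. The half circle $(\partial T^{j}_{a_{\varepsilon}})^{+}$ has length $\pi a_{\varepsilon}$, while $\frac{\pi}{2l_{0}}|l^{j}_{\varepsilon}|=\frac{\pi}{2l_{0}}\,2a_{\varepsilon}l_{0}=\pi a_{\varepsilon}$. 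So the two weighted measures have the same total mass on each cell, which is exactly the compatibility condition for a Neumann problem.

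First I construct a fixed reference function. Let $B=\{|y|<1,\ y_{2}>0\}$ be the unit upper half-disk. Let $V\in H^{1}(B)$, normalized by $\int_{B}V\,dy=0$, be the weak solution of
\[
\Delta V=0 \ \text{in } B,\qquad \partial_{\nu}V=1 \ \text{on } \{|y|=1,\ y_{2}>0\},\qquad \partial_{\nu}V=-\tfrac{\pi}{2l_{0}} \ \text{on } \hat l_{0},
\]
with $\partial_{\nu}V=0$ on the rest of the flat boundary. The data are bounded functions on $\partial B$, hence lie in $L^{2}(\partial B)\subset (H^{1}(B))'$ via the trace. The compatibility condition $\pi-\frac{\pi}{2l_{0}}\cdot 2l_{0}=0$ holds, so the Lax--Milgram theorem on the mean-zero subspace gives $V$, with $\Vert\nabla V\Vert_{L^{2}(B)}=C_{1}$ depending only on $l_{0}$. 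The weak formulation reads
\[
\int_{B}\nabla V\cdot\nabla g\,dy=\int_{\{|y|=1\}^{+}}g\,ds-\frac{\pi}{2l_{0}}\int_{\hat l_{0}}g\,dy_{1}\qquad\text{for all } g\in H^{1}(B).
\]
Next I rescale to each cell, setting $v^{j}_{\varepsilon}(x)=a_{\varepsilon}V\bigl((x-P^{j}_{\varepsilon})/a_{\varepsilon}\bigr)$ on $(T^{j}_{a_{\varepsilon}})^{+}$. Changing variables in the weak formulation, all powers of $a_{\varepsilon}$ match: the gradient is unscaled and both line elements scale like $a_{\varepsilon}$. This gives, for every $h\in H^{1}(\Omega,\Gamma_{1})$,
\[
\int_{(\partial T^{j}_{a_{\varepsilon}})^{+}}h\,ds-\frac{\pi}{2l_{0}}\int_{l^{j}_{\varepsilon}}h\,dx_{1}=\int_{(T^{j}_{a_{\varepsilon}})^{+}}\nabla v^{j}_{\varepsilon}\cdot\nabla h\,dx,
\qquad
\Vert\nabla v^{j}_{\varepsilon}\Vert^{2}_{L^{2}((T^{j}_{a_{\varepsilon}})^{+})}=C_{1}^{2}a_{\varepsilon}^{2}.
\]

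Then I multiply by $\beta(\varepsilon)$ and sum over $j\in\Upsilon_{\varepsilon}$, a set with at most $K/\varepsilon$ elements. The half-disks are pairwise disjoint and contained in $\Omega$, so Cauchy--Schwarz gives
\[
\Bigl|\beta(\varepsilon)\sum_{j}\int_{(\partial T^{j}_{a_{\varepsilon}})^{+}}h\,ds-\frac{\beta(\varepsilon)\pi}{2l_{0}}\int_{l_{\varepsilon}}h\,dx_{1}\Bigr|
\le \beta(\varepsilon)\Bigl(\sum_{j}C_{1}^{2}a_{\varepsilon}^{2}\Bigr)^{1/2}\Vert\nabla h\Vert_{L^{2}(\Omega)}
\le \beta(\varepsilon)a_{\varepsilon}\,C_{1}\sqrt{K/\varepsilon}\,\Vert h\Vert_{H^{1}(\Omega,\Gamma_{1})}.
\]
The critical scaling gives $\beta(\varepsilon)a_{\varepsilon}=C_{0}\varepsilon$, so the right-hand side is $K\sqrt{\varepsilon}\,\Vert h\Vert_{H^{1}(\Omega,\Gamma_{1})}$, which is \eqref{42}.

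The main delicate point is the well-posedness and scaling of the cell problem. One must check that the piecewise-constant Neumann data, which jump at the endpoints of $\hat l_{0}$ and at the corners $y=(\pm1,0)$, still define a bounded functional on $H^{1}(B)$; this follows from the trace theorem. One must also check that the Green identity is legitimate for $h$ that is merely $H^{1}$; this holds because it is exactly the weak formulation, so no regularity of $V$ beyond $H^{1}$ is needed. Everything else is bookkeeping with the identity $\beta(\varepsilon)a_{\varepsilon}=C_{0}\varepsilon$.
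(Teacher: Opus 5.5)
Your proof is correct and complete. The following steps all check out:
\begin{itemize}
\item the compatibility $\pi a_{\varepsilon}=\frac{\pi}{2l_{0}}|l^{j}_{\varepsilon}|$;
\item the construction of $V$ by Lax--Milgram on mean-zero functions, with the weak identity extended to constants precisely because of compatibility;
\item the exact scale invariance of that identity under $x=P^{j}_{\varepsilon}+a_{\varepsilon}y$;
\item the final Cauchy--Schwarz step, using $\#\Upsilon_{\varepsilon}\le K/\varepsilon$ and $\beta(\varepsilon)a_{\varepsilon}=C_{0}\varepsilon$.
\end{itemize}
The fact that the half-disks $(T^{j}_{a_{\varepsilon}})^{+}$ lie in $\Omega$ is the same implicit geometric assumption the paper already makes for $(T^{j}_{\varepsilon/4})^{+}$.

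The paper gives no argument of its own for this lemma; it imports it from \cite{D3GCPodSh2018}. Your device is the standard one for estimates of this kind: an auxiliary harmonic function in the half-disk with balanced Neumann fluxes $1$ and $-\pi/(2l_{0})$. So you have in effect supplied a self-contained proof. It is in fact slightly sharper than stated, since only $\Vert\nabla h\Vert_{L^{2}(\bigcup_{j}(T^{j}_{a_{\varepsilon}})^{+})}$ enters the right-hand side.
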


\textit{\textrm{By virtue of Lemma~3, we have 
\begin{gather}
\frac{1}{C_{0}\alpha^{2}}\Bigl|\beta(\varepsilon)\sum \limits_{j\in
\Upsilon_{\varepsilon}}\int \limits_{0}^{T}\int \limits_{(\partial{%
T^{j}_{a_{\varepsilon}}})^{+}}(\phi(x,t)-H^{j}_{\varepsilon,\phi}(t))(%
\phi-Q_{\varepsilon,\phi}-u_{\varepsilon})ds{dt} -  \notag \\
-\frac{\pi \beta(\varepsilon)}{2l_{0}}\sum \limits_{j\in
\Upsilon_{\varepsilon}}\int \limits_{0}^{T}\int
\limits_{l^{j}_{\varepsilon}}(\phi(x,t)-H^{j}_{\varepsilon,\phi}(t))(H^{j}_{%
\varepsilon,\phi}-u_{\varepsilon})dx_{1}dt\Bigr|\le K\sqrt{\varepsilon}.
\label{43}
\end{gather}
}}

\textit{\textrm{From \eqref{41} and \eqref{43}, we obtain 
\begin{gather}
J_{\varepsilon}=-\frac{4}{\alpha^{2}}\sum \limits_{j\in
\Upsilon_{\varepsilon}}\int \limits_{0}^{T}\int \limits_{(\partial{%
T^{j}_{\varepsilon/4}})^{+}}(\phi(x,t)-H^{j}_{\varepsilon,\phi}(t))(\phi-u_{%
\varepsilon})ds{dt} +  \notag \\
+\frac{\pi \beta(\varepsilon)}{2\alpha^{2}C_{0}l_{0}}\sum \limits_{j\in
\Upsilon_{\varepsilon}}\int \limits_{0}^{T}\int
\limits_{l^{j}_{\varepsilon}}(\phi(x,t)-H^{j}_{\varepsilon,\phi})(H^{j}_{%
\varepsilon,\phi}-u_{\varepsilon})dx_{1}dt +\alpha_{\varepsilon}.  \label{44}
\end{gather}
}}

\textit{\textrm{Now, we consider all integrals over $l^{T}_{\varepsilon}$
included into variational inequality 
\begin{equation}  \label{45}
\beta(\varepsilon)\sum \limits_{j\in \Upsilon_{\varepsilon}}\int
\limits_{0}^{T}\int
\limits_{l^{j}_{\varepsilon}}(\partial_{t}H^{j}_{\varepsilon,\phi}+\mathcal{L%
}H^{j}_{\varepsilon,\phi}+\sigma(H^{j}_{\varepsilon,\phi})- \mathcal{L}%
\phi(x,t))(H^{j}_{\varepsilon,\phi}-u_{\varepsilon})dx_{1}dt\equiv \mathcal{P%
}_{\varepsilon},
\end{equation}
where $\mathcal{L}=\frac{\pi}{2\alpha^{2}C_{0}l_{0}}$. }}

\textit{\textrm{As $\vert \phi(x, t) - \phi(P^j_\varepsilon, t)\vert \le K
a_\varepsilon$ for $x\in l^j_\varepsilon$, we have 
\begin{equation}  \label{46}
\mathcal{P}_{\varepsilon}=\beta(\varepsilon)\sum \limits_{j\in
\Upsilon_{\varepsilon}}\int \limits_{0}^{T}\int
\limits_{l^{j}_{\varepsilon}}(\partial_{t}H^{j}_{\varepsilon,\phi}+\mathcal{L%
}H^{j}_{\varepsilon,\phi}+\sigma(H^{j}_{\varepsilon,\phi})- \mathcal{L}%
\phi(P^{j}_{\varepsilon},t))(H^{j}_{\varepsilon}-u_{\varepsilon})dx_{1}dt
+\alpha_{\varepsilon},
\end{equation}
where $\alpha_{\varepsilon}\to 0$ as $\varepsilon \to 0$. Taking into
account that $H^{j}_{\varepsilon,\phi}(t)$ is a solution of the problem %
\eqref{25}, and using that $u_{\varepsilon}\Bigl|_{l^{T}_{\varepsilon}}\ge 0$%
, we conclude that $\mathcal{P}_{\varepsilon}\le 0$. }}

\textit{\textrm{Using the last conclusion, from \eqref{38}, we obtain the
inequality 
\begin{gather}
\int \limits_{Q^{T}}\nabla \phi
\nabla(\phi-Q_{\varepsilon,\phi}-u_{\varepsilon})dx{dt}+\frac{4}{\alpha^{2}}%
\sum \limits_{j\in \Upsilon_{\varepsilon}}\int \limits_{0}^{T}\int
\limits_{(\partial{T}^{j}_{\varepsilon/4})^{+}}(\phi-H^{j}_{\varepsilon,%
\phi}(t))(\phi-u_{\varepsilon})ds{dt}\ge  \notag \\
\ge \int \limits_{Q^{T}}f(\phi-Q_{\varepsilon,\phi}-u_{\varepsilon})dx{dt}%
+\alpha_{\varepsilon}.  \label{47}
\end{gather}
}}

\textit{\textrm{The assertion proved in \cite{D3GCPodSh2018},\cite{Zubova7}
implies that 
\begin{gather}
\lim \limits_{\varepsilon \to 0}\frac{4}{\alpha^{2}}\sum \limits_{j\in
\Upsilon_{\varepsilon}}\int \limits_{0}^{T}\int \limits_{(\partial{T}%
^{j}_{\varepsilon/4})^{+}}(\phi(P^{j}_{\varepsilon},t)-H^{j}_{\varepsilon,%
\phi}(t))(\phi-u_{\varepsilon})ds{dt} =  \notag \\
=\frac{\pi}{\alpha^{2}}\int \limits_{0}^{T}\int
\limits_{\Gamma_{2}}(\phi(x,t)-H_{\phi}(x,t))(\phi-u_{\varepsilon})dx_{1}dt,
\label{48}
\end{gather}
where $H_{\phi}(x,t)$ is defined by \eqref{14}. }}

\textit{\textrm{Using the properties of the function $Q_{\varepsilon ,\phi }$%
, from \eqref{47}, \eqref{48}, we conclude, as $\varepsilon \rightarrow 0$,
that $u_{0}$ satisfies the following variational inequality 
\begin{equation}
\int \limits_{Q^{T}}\nabla \phi \nabla (\phi -u_{0})dx{dt}+\frac{\pi }{%
\alpha ^{2}}\int \limits_{\Gamma _{2}^{T}}(\phi -H_{\phi })(\phi
-u_{0})dx_{1}dt\geq \int \limits_{Q^{T}}f(\phi -u_{0})dx{dt},  \label{49}
\end{equation}%
where $\phi $ is an arbitrary function from $L^{2}(0,T;H^{1}(\Omega ,\Gamma
_{1}))$, and $H_{\phi }(x,t)\in H^{1}(0,T;L^{2}(\Gamma _{2}))$ is a solution
of the obstacle problem for a.e. $x\in \Gamma _{2}$ 
\begin{equation*}
\left \{ 
\begin{array}{lr}
\frac{d}{dt}H_{\phi }+\mathcal{L}H_{\phi }+\sigma (H_{\phi })\geq \mathcal{L}%
\phi ,\,H_{\phi }\geq 0, & t\in (0,T), \\ 
H_{\phi }(\frac{d}{dt}H_{\phi }+\mathcal{L}H_{\phi }+\sigma (H_{\phi })-%
\mathcal{L}\phi )=0, & t\in (0,T), \\ 
H_{\phi }(x,0)=u^{0}(x). & 
\end{array}%
\right.
\end{equation*}%
}}

\section{The time periodic problem}

\subsection{Time periodic solutions of the microscopic problem}

In order to show the existence and uniqueness of time-periodic solutions of $%
(PP)$ it is useful to reformulate the problem in terms of the abstract
subdifferentials of convex functions on Hilbert space $H$

\begin{equation}
\left \{ 
\begin{array}{lc}
\frac{du}{dt}(t)+\partial \phi ^{t}(u(t))\ni 0 & \text{on }H \\ 
u(0)=u_{0}. & 
\end{array}%
\right.  \label{EcAbstract_Formulation}
\end{equation}%
Obviously, to apply such abstract formulation, since the term $\beta
(\varepsilon )\partial _{t}u_{\varepsilon } $ appears in the dynamic
boundary condition, we must introduce a change of time-variable 
\begin{equation}
t = \widetilde{t}_\varepsilon \beta (\varepsilon )\text{ and }\widetilde{u}%
_\varepsilon(x, \widetilde{t}_\varepsilon) = u_\varepsilon (x, \widetilde{t}%
_\varepsilon \beta (\varepsilon )) = u_\varepsilon (x, t).
\label{ChageTimevariable}
\end{equation}%
In this way, 
\begin{equation*}
\beta (\varepsilon )\frac{\partial u_\varepsilon}{\partial t}(x,t)=\frac{%
\partial \widetilde{u}_\varepsilon}{\partial \widetilde{t}_\varepsilon}(x, 
\widetilde{t}_\varepsilon),
\end{equation*}%
and thus we can apply the abstract theory to the function $\widetilde{u}%
_\varepsilon(x, \widetilde{t}_\varepsilon)$. To simplify the notation we
dispense with that distinction and identify $u_{\varepsilon }$ with $%
\widetilde{u}_\varepsilon$.

Our key point of view is based on the main theorem of \cite{D-Jimenez} (see
also Remark 3.1 of \cite{BDVrabie}). When applied to our formulation of $%
(IVP)$, we get the following: let $H=L^{2}(l_{\varepsilon })$, and $\phi
^{t}:H\rightarrow (-\infty ,+\infty ]$, with $\phi ^{t}\not \equiv +\infty $, 
$D(\phi ^{t})=\left \{ u\in H,\phi ^{t}(u)<+\infty \right \} $ is the function
given by%
\begin{equation*}
\phi ^{t}(u)=\left \{ 
\begin{array}{cc}
\begin{gathered} \frac{1}{2}\int \limits_{\Omega }\Vert \nabla U(x)\Vert
^{2}dx-\int \limits_{\Omega }f(x,t)U(x)dx + \beta (\varepsilon
)\int \limits_{l_{\varepsilon }}j_{\sigma }(u(\sigma ))d\sigma +
\int \limits_{l_{\varepsilon }}j_{S}(u(\sigma ))d\sigma , \\ \text{if }U\in
H^{1}(\Omega ),U\vert_{l_{\varepsilon }} = u,\partial_{\nu }U\vert_{\gamma
_{\varepsilon }} = 0, U\vert_{\Gamma _{1}}=0,j_{\sigma }(u),j_{S}(u)\in
L^{1}(l_{\varepsilon }),\end{gathered} &  \\ 
+\infty \  \  \text{in the rest}, & 
\end{array}%
\right. 
\end{equation*}%
where the subdifferentials of the convex functions $j_{\sigma }$ and $j_{S}$
are given by $\partial j_{\sigma }(r)=\sigma (r)$ and $\partial
j_{S}(r)=\theta (r)$, the Signorini maximal monotone graph in $\mathbb{R}^{2}
$ given by 
\begin{equation*}
\theta (r)=\left \{ 
\begin{array}{cl}
\{0\}, & r>0, \\ 
\lbrack 0,+\infty ), & r=0, \\ 
\emptyset , & r<0,%
\end{array}%
\right. 
\end{equation*}%
i.e. the functions are 
\begin{equation*}
j_{\sigma }(r)=\int_{0}^{r}\sigma (s)ds\text{ and }j_{S}(r)=0\text{ if }%
r\geq 0,j_{S}(r)=+\infty \text{ if }r<0.\text{ }
\end{equation*}%
$\phi ^{t}$ is convex lower semi-continuous and $\phi ^{t}\not \equiv +\infty 
$ on $H=L^{2}(l_{\varepsilon }).$ As in \cite{D-Jimenez}, it is easy to
prove that $\overline{D(\phi ^{t})}=L_{+}^{2}(l_{\varepsilon })=\left \{ w\in
L^{2}(l_{\varepsilon }):w\geq 0\text{ on }l_{\varepsilon }\right \} .$
Moreover, after the change of variables (\ref{ChageTimevariable}) the
initial value problem (IVP) 
can be formulated in terms of the abstract theory for subdifferentials of
convex functions (\ref{EcAbstract_Formulation}). In addition, we have:

\begin{lemma}
\label{LemmHypoKenmoch}Assuming $f\in H^{1}(0,T;L^{2}(\Omega ))$, $\phi
^{t}(u)$ satisfies the Kenmochi assumption (see Section 1.5 of \cite%
{Kenmochi1981}): for any $r>0$ $\exists \alpha _{r}\in H^{1}(0,T)$ and $%
\beta _{r}\in W^{1,1}(0,T)$ such that for each $s,t\in \lbrack 0,T]$ with $%
s\leq t$ and $z\in D(\phi ^{s})$ with $\left \vert z\right \vert _{H}\leq r$
there exists $\widehat{z}\in D(\phi ^{t})$ such that%
\begin{equation}
\left \vert z-\widehat{z}\right \vert _{H}\leq \left \vert \alpha
_{r}(t)-\alpha _{r}(s)\right \vert (1+\left \vert \phi ^{s}(z)\right \vert
_{H}^{1/2})  \label{HypoKen1}
\end{equation}

and 
\begin{equation}
\phi ^{t}(\widehat{z})-\phi ^{s}(z)\leq \left \vert \beta _{r}(t)-\beta
_{r}(s)\right \vert (1+\left \vert \phi ^{s}(z)\right \vert _{H}).
\label{HypoKen2}
\end{equation}
\end{lemma}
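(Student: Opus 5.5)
The plan is to take $\widehat z = z$ itself. The key observation is that $D(\phi^t)$ does not depend on $t$: the constraints defining it are $U\in H^1(\Omega)$, $U|_{l_\varepsilon}=u\ge 0$, $U|_{\Gamma_1}=0$ and $j_\sigma(u)\in L^1(l_\varepsilon)$, and none of them involves $f$. Time enters $\phi^t$ only through the linear term $-\int_\Omega f(x,t)U\,dx$. With $\widehat z=z$, \eqref{HypoKen1} is trivial for any $\alpha_r$ (for instance $\alpha_r\equiv 0$). It therefore remains to control
\[
\phi^t(z)-\phi^s(z)=-\int_\Omega \bigl(f(x,t)-f(x,s)\bigr)U(x)\,dx
\le \|f(t)-f(s)\|_{L^2(\Omega)}\,\|U\|_{L^2(\Omega)} .
\]

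Before estimating this, I will fix which extension $U$ of $z$ is meant. The most natural reading, needed anyway for $\phi^t$ to be a well-defined function of $u\in L^2(l_\varepsilon)$, is the minimizing harmonic-type extension of the Dirichlet energy minus the source term. That is, $\phi^t(u)$ is the infimum over admissible $U$ with trace $u$ on $l_\varepsilon$, and the infimum is attained by the solution of $-\Delta U=f(t)$ with Neumann data on $\gamma_\varepsilon$. To avoid depending on $t$ through $U$, I will instead use the inequality $\phi^t(z)\le \Phi^t(U_s)$, where $U_s$ is the minimizer for time $s$ and $\Phi^t$ denotes the functional before minimization. This gives $\phi^t(z)-\phi^s(z)\le \|f(t)-f(s)\|_{L^2}\|U_s\|_{L^2}$.

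Next I bound $\|U_s\|_{L^2(\Omega)}$ in terms of $\phi^s(z)$ and $r$. Poincaré's inequality in $H^1(\Omega,\Gamma_1)$ gives $\|U_s\|_{L^2}\le C\|\nabla U_s\|_{L^2}$. Since $j_\sigma\ge 0$ (because $\sigma(0)=0$ and $\sigma$ is increasing) and $j_S=0$ on the domain, we have
\[
\tfrac12\|\nabla U_s\|^2\le \phi^s(z)+\|f(s)\|_{L^2}\|U_s\|_{L^2}\le \phi^s(z)+C\|f(s)\|\,\|\nabla U_s\|.
\]
Hence $\|\nabla U_s\|^2\le 4\phi^s(z)+C'\sup_{[0,T]}\|f\|^2_{L^2}$, where the supremum is finite because $H^1(0,T;L^2)\subset C([0,T];L^2)$. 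Note that $\phi^s(z)\ge -C\sup\|f\|^2$, so $|\phi^s(z)|$ controls the right-hand side. Combining, $\|U_s\|_{L^2}\le C(1+|\phi^s(z)|)$. Using $\sqrt{1+a}\le 1+a$ is not even necessary, since the linear bound already has the required form. The constant depends on $\varepsilon$ only harmlessly, since $\varepsilon$ is fixed here.

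Finally, I set $\beta_r(t)=C\int_0^t\|\partial_t f(\tau)\|_{L^2(\Omega)}\,d\tau$. This function lies in $W^{1,1}(0,T)$, indeed in $H^1$, since $\partial_t f\in L^2(0,T;L^2)$. Moreover $\|f(t)-f(s)\|\le \int_s^t\|\partial_t f\|=|\beta_r(t)-\beta_r(s)|/C$, which yields \eqref{HypoKen2}. Notice that $\beta_r$ does not even depend on $r$. The only delicate point is the first one: making precise that $\phi^t$ is defined through the minimal extension, so that it is a genuine function on $H$ with $t$-independent domain. Once that is fixed, the remaining steps are the energy bound on $U_s$ above.
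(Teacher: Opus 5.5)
Your proof is correct and follows the paper's argument: you take $\widehat z=z$ and $\alpha_r\equiv 0$, estimate the linear term by Cauchy--Schwarz and Poincar\'e, use the coercivity bound $\|\nabla U\|^2\le C(1+|\phi^s(z)|)$, and set $\beta_r(t)=C\int_0^t\|\partial_t f(\cdot,\tau)\|_{L^2(\Omega)}\,d\tau$. The one refinement is in how the extension of $z$ is handled: the paper treats $U$ as a fixed extension and writes $\phi^t(z)-\phi^s(z)$ as an exact equality, whereas you use the minimizing characterization $\phi^t(z)\le\Phi^t(U_s)$, which gives only the one-sided bound (all that \eqref{HypoKen2} requires) and avoids any $t$-dependence of the extension.
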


\noindent \textit{Proof.} Let $r>0$, $s,t\in \lbrack 0,T]$ with $s\leq t$,
and let $z\in D(\phi ^{s}),$ $\left \vert z\right \vert _{H}\leq r.$ Since
the only time dependence of $\phi ^{t}$ is contained in the linear term $%
\int_{\Omega }f(x,t)U(x)\,dx,$ the effective domain is independent of $t$: 
\begin{equation*}
D(\phi ^{t})=D(\phi ^{s}),\qquad \forall \,s,t\in \lbrack 0,T].
\end{equation*}%
Therefore, to check both conditions in the statement, we simply choose $%
\widehat{z}=z.$ \ For the first condition (\ref{HypoKen1}), we have $%
\left
\vert z-\widehat{z}\right \vert _{H}=0.$ Hence, it is enough to take $%
\alpha _{r}(t)\equiv 0$ and (\ref{HypoKen1}) holds. To verify (\ref{HypoKen2}%
), let $U\in H^{1}(\Omega )$ be the extension associated with $z$. Since $%
\widehat{z}=z$, 
\begin{equation*}
\phi ^{t}(\widehat{z})-\phi ^{s}(z)=\phi ^{t}(z)-\phi
^{s}(z)=-\int \limits_{\Omega }(f(x,t)-f(x,s))U(x)\,dx.
\end{equation*}%
Therefore, we get the estimate 
\begin{equation*}
|\phi ^{t}(z)-\phi ^{s}(z)|\leq \Vert f(\cdot, t)-f(\cdot, s)\Vert _{L^{2}(\Omega )}\Vert
U\Vert _{L^{2}(\Omega )}.
\end{equation*}%
Since $U=0$ on $\Gamma _{1}$, Poincar\'{e}'s inequality yields 
\begin{equation*}
\Vert U\Vert _{L^{2}(\Omega )}\leq C_{P}\Vert \nabla U\Vert _{L^{2}(\Omega
)}.
\end{equation*}%
Moreover, 
\begin{align*}
\phi ^{s}(z)& =\frac{1}{2}\Vert \nabla U\Vert _{L^{2}(\Omega
)}^{2}-\int \limits_{\Omega }f(x,s)U(x)\,dx \\
& \quad + \beta (\varepsilon )\int \limits_{l_{\varepsilon }}j_{\sigma
}(z)\,d\sigma +\int \limits_{l_{\varepsilon }}j_{S}(z)\,d\sigma .
\end{align*}%
Since the last two terms are nonnegative, 
\begin{equation*}
\phi ^{s}(z)\geq \frac{1}{2}\Vert \nabla U\Vert _{L^{2}(\Omega )}^{2}-\Vert
f(\cdot, s)\Vert _{L^{2}(\Omega )}\Vert U\Vert _{L^{2}(\Omega )}.
\end{equation*}%
Using Poincar\'{e}'s inequality and Young's inequality, 
\begin{equation*}
\phi ^{s}(z)\geq \frac{1}{4}\Vert \nabla U\Vert _{L^{2}(\Omega )}^{2}-C,
\end{equation*}%
where $C>0$ depends only on 
\begin{equation*}
\sup_{\tau \in \lbrack 0,T]}\Vert f(\cdot, \tau )\Vert _{L^{2}(\Omega )}.
\end{equation*}%
Hence 
\begin{equation*}
\Vert \nabla U\Vert _{L^{2}(\Omega )}^{2}\leq C\bigl(1+\phi ^{s}(z)\bigr),
\end{equation*}%
and consequently 
\begin{equation*}
\Vert U\Vert _{L^{2}(\Omega )}\leq C\bigl(1+\phi ^{s}(z)\bigr)^{1/2}.
\end{equation*}%
Substituting into the previous estimate, 
\begin{equation*}
|\phi ^{t}(z)-\phi ^{s}(z)|\leq C\, \Vert f(\cdot, t)-f(\cdot, s)\Vert _{L^{2}(\Omega )}%
\bigl(1+\phi ^{s}(z)\bigr)^{1/2}.
\end{equation*}%
Since 
\begin{equation*}
(1+a)^{1/2}\leq 1+a,\text{ for any }a\geq 0,
\end{equation*}%
we obtain 
\begin{equation*}
|\phi ^{t}(z)-\phi ^{s}(z)|\leq C\, \Vert f(\cdot, t)-f(\cdot, s)\Vert _{L^{2}(\Omega )}%
\bigl(1+\phi ^{s}(z)\bigr).
\end{equation*}%
Since $f\in H^{1}(0,T;L^{2}(\Omega )),$ we have 
\begin{equation*}
\Vert f(\cdot, t)-f(\cdot, s)\Vert _{L^{2}(\Omega )}\leq \int_{s}^{t}\Vert f^{\prime
}(\cdot, \tau )\Vert _{L^{2}(\Omega )}\,d\tau .
\end{equation*}%
Define 
\begin{equation*}
\beta _{r}(t)=C\int_{0}^{t}\Vert f^{\prime }(\cdot, \tau )\Vert _{L^{2}(\Omega
)}\,d\tau .
\end{equation*}%
Then $\beta _{r}\in W^{1,1}(0,T)$ and 
\begin{equation*}
\phi ^{t}(\widehat{z})-\phi ^{s}(z)\leq |\beta _{r}(t)-\beta _{r}(s)|\bigl(%
1+\phi ^{s}(z)\bigr),
\end{equation*}%
and (\ref{HypoKen2}) holds.$_{\blacksquare }$

\bigskip

Then we have

\begin{theorem}
Assume $f\in H^{1}(\mathbb{R},L^{2}(\Omega ))$ $T$-periodic. Then, there
exists a unique strong $T$-periodic solution of $(PP).$
\end{theorem}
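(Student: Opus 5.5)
The plan is to obtain the periodic solution as a fixed point of the Poincaré map $F:u^{0}\mapsto u_{\varepsilon}(\cdot,T)$ on the closed convex set $L^{2}_{+}(l_{\varepsilon})=\overline{D(\phi^{t})}$, working in the rescaled time of (\ref{ChageTimevariable}). The period $T$ becomes $T/\beta(\varepsilon)$, which is irrelevant for fixed $\varepsilon$. By Lemma \ref{LemmHypoKenmoch}, the family $\phi^{t}$ satisfies Kenmochi's hypotheses. For every $u^{0}\in L^{2}_{+}(l_{\varepsilon})$, the abstract theory of \cite{Kenmochi1981} therefore gives a unique solution of (\ref{EcAbstract_Formulation}) in $C([0,T];H)$, with $\sqrt{t}\,u'\in L^{2}(0,T;H)$. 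Theorem 1 gives the strong solution for smoother $u^{0}$, and it is the same object. The periodicity of $f$ makes $\phi^{t+T}=\phi^{t}$, so any fixed point of $F$ extends to a $T$-periodic strong solution on $\mathbb{R}$. This is the structure of the abstract periodic result of \cite{Kenmochi1981} (cf.\ also \cite{D-Jimenez}), and I would verify its remaining hypothesis, coercivity, directly.

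The key step is that $F$ is a strict contraction in $H=L^{2}(l_{\varepsilon})$. Take two solutions $u,v$ with data $u^{0},v^{0}$. Test the inequality (\ref{2}) for $u$ with $\phi=v$, test the inequality for $v$ with $\phi=u$, and add. The $f$-terms cancel, and we get
\begin{equation*}
\frac{\beta(\varepsilon)}{2}\frac{d}{dt}\Vert u-v\Vert^{2}_{L^{2}(l_{\varepsilon})}+\Vert\nabla(u-v)\Vert^{2}_{L^{2}(\Omega)}+\beta(\varepsilon)k_{1}\Vert u-v\Vert^{2}_{L^{2}(l_{\varepsilon})}\le 0 .
\end{equation*}
To justify this pointwise in time, I would test with functions localized on $(0,t)$, or argue at the penalized level (\ref{5}) and pass to the limit. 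Dropping the gradient term and using $\sigma'\ge k_{1}>0$ gives
\begin{equation*}
\Vert F(u^{0})-F(v^{0})\Vert_{H}\le e^{-k_{1}T}\Vert u^{0}-v^{0}\Vert_{H}.
\end{equation*}
The strong monotonicity of $\sigma$ is exactly what makes the map contractive. Banach's fixed point theorem on the complete metric space $L^{2}_{+}(l_{\varepsilon})$ then gives a unique fixed point $\bar u^{0}$.

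Uniqueness among all periodic solutions follows from the same estimate. If $u,v$ are both $T$-periodic, then
\begin{equation*}
\Vert u(T)-v(T)\Vert=\Vert u(0)-v(0)\Vert\le e^{-k_{1}T}\Vert u(0)-v(0)\Vert,
\end{equation*}
which forces equal traces on $l_{\varepsilon}$. The elliptic problem in $\Omega$ then gives $u=v$.

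The main obstacle is regularity. The fixed point $\bar u^{0}$ lies only in $L^{2}_{+}(l_{\varepsilon})$, not in $H^{1}_{0}$, so Theorem 1 does not apply directly to show that the periodic solution is strong on all of $[0,T]$. I would use the smoothing of subdifferential flows. Kenmochi's estimates give $\phi^{t}(u(t))<\infty$ and $u'\in L^{2}(\delta,T;H)$ for every $\delta>0$, so $u(\delta)\in D(\phi^{\delta})$. Then I use periodicity: the solution on $[0,T]$ coincides with the shifted solution on $[T,2T]$, which starts from $u(T)=\bar u^{0}$. So $\bar u^{0}=u(T)$ already has finite energy, and a restart from $t=\delta$ transports the regularity to all of $\mathbb{R}$. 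This yields $\partial_{t}u_{\varepsilon}\in L^{2}_{loc}(\mathbb{R};L^{2}(l_{\varepsilon}))$ and $u_{\varepsilon}\in L^{2}_{loc}(\mathbb{R};H^{1}(\Omega,\Gamma_{1}))$, with $u_{\varepsilon}(t)\in\mathcal{K}_{\varepsilon}$. The condition $\partial_{\nu}u=0$ on $\gamma_{\varepsilon}$ is encoded in the harmonic extension inside $\phi^{t}$, and undoing the time rescaling recovers the inequality (\ref{2}) on every period.
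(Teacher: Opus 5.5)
Your proof is correct, but it takes a different route from the paper. The paper's proof is a pure citation: existence of the $T$-periodic solution comes from Kenmochi's abstract periodic theorem (Theorem 2.3.1 of \cite{Kenmochi1981}), justified by the time-regularity conditions (\ref{HypoKen1})--(\ref{HypoKen2}) of Lemma \ref{LemmHypoKenmoch}. Uniqueness comes from Theorem 2.3.2 there together with strict convexity of $\phi^{t}$.

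You instead use the quantitative bound $\sigma'\ge k_{1}>0$ to show that the Poincar\'{e} map is a strict contraction of $L^{2}_{+}(l_{\varepsilon})$ with constant $e^{-k_{1}T}$. Banach's theorem then gives existence and uniqueness at once, and you recover strong regularity from the smoothing effect via $\bar u^{0}=u(T)\in D(\phi^{T})$.

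Your route is self-contained. You never invoke the abstract periodic theorem, so the coercivity check you announce is superfluous. Your contraction constant is also independent of $\varepsilon$. The argument also gives more than the statement:
\begin{itemize}
\item every solution of $(IVP)$ is attracted to $u^{T}_{\varepsilon}$ in $L^{2}(l_{\varepsilon})$ at rate $e^{-k_{1}t}$;
\item since $F(K)\subset K$ with $K=[u^{1}_{\varepsilon},u^{2}_{\varepsilon}]$ closed, Picard iteration from any point of $K$ shows the fixed point lies in $K$. This would make the Schauder/compactness argument in Theorem \ref{Thm PeriodicEstimates} unnecessary.
\end{itemize}
The paper's route is shorter and relies only on strict convexity of $\phi^{t}$, not on a uniform lower bound for $\sigma'$.

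One step you should write out is the contraction estimate for data merely in $L^{2}_{+}(l_{\varepsilon})$, where \eqref{2} is not directly available. Derive the differential inequality on $[\delta,T]$, where the solution is strong by the smoothing effect, and let $\delta\to 0$ using continuity in $C([0,T];L^{2}(l_{\varepsilon}))$.
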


\noindent \textit{Proof.} \ The existence of a $T$-periodic solution of $%
(PP) $ is a consequence of Theorem 2.3.1 of \cite{Kenmochi1981} (thanks to
the conditions (\ref{HypoKen1}) and (\ref{HypoKen2})). The uniqueness is
consequence from the fact that $\phi ^{t}(u)$ is strictly convex and Theorem
2.3.2 of \cite{Kenmochi1981}.$_{\blacksquare }$

\bigskip


\textbf{Remark. } Let $\Omega \subset \mathbb{R}^{n}$ be an open set
(bounded or not) and $T>0$. We consider the Bochner-Sobolev space 
\begin{equation*}
H^{1}(0,T;L^{2}(\Omega )):=\bigl \{f:(0,T)\times \Omega \rightarrow \mathbb{R}%
\; \vert \;f\in L^{2}(0,T;L^{2}(\Omega )),\; \; \partial _{t}f\in
L^{2}(0,T;L^{2}(\Omega ))\bigr \}.
\end{equation*}%
Here, $\partial _{t}f$ is taken in the distributional sense. The norm is 
\begin{equation*}
\Vert f\Vert _{H^{1}(0,T;L^{2}(\Omega ))}^{2}=\int_{0}^{T}\int_{\Omega
}|f(x, t)|^{2}\,dx\,dt+\int_{0}^{T}\int_{\Omega }|\partial _{t}f(x,
t)|^{2}\,dx\,dt.
\end{equation*}

For almost every $x\in \Omega $, the map $t\mapsto f(x, t)$ belongs to $%
H^{1}(0,T)$, and hence to $C([0,T])$ by the Sobolev embedding in one
dimension. Therefore, we can define 
\begin{equation*}
M(x):=\sup_{t\in \lbrack 0,T]}|f(x, t)|.
\end{equation*}

\begin{lemma}
If $f\in H^{1}(0,T;L^{2}(\Omega ))$ then $\sup_{t\in \lbrack 0,T]}|f(\cdot,
t)|\in L^{2}(\Omega )$.
\end{lemma}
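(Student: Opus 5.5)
The idea is to dominate the supremum pointwise in $x$ by an integral quantity in $t$, using the one-dimensional fundamental theorem of calculus. Then $L^2(\Omega)$-integrability of $M$ follows from Fubini and the definition of the norm of $H^{1}(0,T;L^{2}(\Omega))$.

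First I fix a representative. Since $H^{1}(0,T;L^{2}(\Omega))\hookrightarrow C([0,T];L^{2}(\Omega))$, and since $f\in L^2(\Omega\times(0,T))$ with $\partial_t f\in L^2(\Omega\times(0,T))$, Fubini shows that for a.e. $x\in\Omega$ both $f(x,\cdot)$ and $\partial_t f(x,\cdot)$ lie in $L^2(0,T)$. Hence $f(x,\cdot)\in H^1(0,T)$, with distributional derivative equal to the slice $\partial_t f(x,\cdot)$. This slice identification is the one technical point. I would justify it by testing against products $\chi(x)\eta(t)$ and using a countable dense family of $\eta\in C_c^\infty(0,T)$, so that the exceptional null set in $x$ does not depend on $\eta$. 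Taking the continuous representative in $t$, we then have for a.e. $x$ and all $s,t\in[0,T]$
\begin{equation*}
f(x,t)=f(x,s)+\int_s^t\partial_t f(x,\tau)\,d\tau .
\end{equation*}
Measurability of $M(x)=\sup_{t\in[0,T]}|f(x,t)|$ follows because, by continuity, the supremum equals the supremum over rational $t$, which is a countable supremum of measurable functions.

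Next I average the identity in $s$ over $(0,T)$ and use Cauchy--Schwarz:
\begin{equation*}
|f(x,t)|\le \frac1T\int_0^T|f(x,s)|\,ds+\int_0^T|\partial_t f(x,\tau)|\,d\tau
\le T^{-1/2}\Bigl(\int_0^T|f(x,s)|^2ds\Bigr)^{1/2}+T^{1/2}\Bigl(\int_0^T|\partial_tf(x,\tau)|^2d\tau\Bigr)^{1/2}.
\end{equation*}
The right-hand side is independent of $t$, so it bounds $M(x)$. Squaring, using $(a+b)^2\le 2a^2+2b^2$, and integrating over $\Omega$ with Fubini gives
\begin{equation*}
\|M\|^2_{L^2(\Omega)}\le 2\max\{T^{-1},T\}\,\|f\|^2_{H^{1}(0,T;L^{2}(\Omega))}<\infty .
\end{equation*}

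As consequences for the paper, since $|f_1|,|f_2|\le M$, the functions $f_1$ and $f_2$ (the essential minimum and maximum in $t$) belong to $L^2(\Omega)$. By $T$-periodicity the same conclusion holds for $f\in H^1(\mathbb{R},L^2(\Omega))$ restricted to one period. The only step I expect to need care is the identification of the slice derivative described above; the rest is routine.
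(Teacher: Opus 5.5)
Your proof is correct and follows essentially the same route as the paper: the slice-wise one-dimensional fundamental-theorem-of-calculus bound averaged over $s$ (the paper's 1D Morrey lemma), then squaring and Fubini/Tonelli, yielding $\|M\|_{L^2(\Omega)}^2\le 2\max\{T^{-1},T\}\,\|f\|^2_{H^1(0,T;L^2(\Omega))}$ (the paper ends with the slightly weaker constant $2(T+1/T)$). Your justification of the slice derivative via a countable dense family of test functions, and of the measurability of $M$, supplies details the paper takes for granted.
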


For the proof, it is useful to recall a well-known result (here it is given
with an explicit estimate)

\begin{lemma}[Morrey inequality in 1D]
For any $g\in H^{1}(0,T)$, we have 
\begin{equation}
\Vert g\Vert _{L^{\infty }(0,T)}\leq C_{T}\Vert g\Vert _{H^{1}(0,T)},
\label{Estimate Morrey}
\end{equation}%
with $C_{T}=\sqrt{\frac{2}{T}+2T}$.
\end{lemma}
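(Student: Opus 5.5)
The argument needs two ingredients: an explicit pointwise formula for $g$ at one point, and a good choice of that point.

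First, by density of $C^{1}([0,T])$ in $H^{1}(0,T)$, and since both sides of \eqref{Estimate Morrey} are continuous with respect to the $H^{1}$-norm (once we know that $H^{1}$ functions have continuous representatives), it suffices to treat $g\in C^{1}([0,T])$. Alternatively, one can work directly with the absolutely continuous representative of $g$, for which the fundamental theorem of calculus holds. Either way, for any $s,t\in[0,T]$ we have
\begin{equation*}
g(t)=g(s)+\int_{s}^{t}g'(\tau)\,d\tau .
\end{equation*}

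Second, I average this identity in $s$ over $(0,T)$, which gives
\begin{equation*}
g(t)=\frac{1}{T}\int_{0}^{T}g(s)\,ds+\frac{1}{T}\int_{0}^{T}\int_{s}^{t}g'(\tau)\,d\tau\,ds .
\end{equation*}
Taking absolute values, I bound the inner integral by $\int_{0}^{T}|g'|$ uniformly in $s$. Cauchy--Schwarz then gives
\begin{equation*}
|g(t)|\le T^{-1/2}\Vert g\Vert_{L^{2}(0,T)}+T^{1/2}\Vert g'\Vert_{L^{2}(0,T)} .
\end{equation*}
Applying $(a+b)^{2}\le 2a^{2}+2b^{2}$, this becomes
\begin{equation*}
|g(t)|^{2}\le \frac{2}{T}\Vert g\Vert_{L^{2}}^{2}+2T\Vert g'\Vert_{L^{2}}^{2}\le \Bigl(\frac{2}{T}+2T\Bigr)\Vert g\Vert_{H^{1}(0,T)}^{2} .
\end{equation*}
Taking the supremum over $t$ and then a square root yields exactly $C_{T}=\sqrt{2/T+2T}$.

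The only delicate point is justifying the pointwise representation for a general $g\in H^{1}(0,T)$. I would take from standard theory that such $g$ admits an absolutely continuous representative with $g'$ equal to its weak derivative. If one instead prefers density, one must check that the $C^{1}$ approximants converge uniformly; this follows from the very estimate just proved applied to their differences, which makes them a Cauchy sequence in $C([0,T])$. The constant arithmetic itself is routine.
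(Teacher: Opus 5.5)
Your proposal is correct and follows the paper's argument: the fundamental theorem of calculus, averaging over $s\in(0,T)$, and Cauchy--Schwarz give $|g(t)|\le T^{-1/2}\Vert g\Vert_{L^{2}(0,T)}+T^{1/2}\Vert g'\Vert_{L^{2}(0,T)}$. Your use of $(a+b)^{2}\le 2a^{2}+2b^{2}$ yields exactly the stated constant $\sqrt{2/T+2T}$, whereas the paper finishes with Cauchy--Schwarz in $\mathbb{R}^{2}$ and obtains the sharper $\sqrt{T+1/T}$; your remark on the absolutely continuous representative also supplies a justification the paper leaves implicit.
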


\noindent \textit{Proof of the estimate (\ref{Estimate Morrey}). }For any $%
s,t\in \lbrack 0,T]$, 
\begin{equation*}
g(t)=g(s)+\int_{s}^{t}g^{\prime }(\tau )\,d\tau ,
\end{equation*}%
hence $|g(t)|\leq |g(s)|+\sqrt{T}\Vert g^{\prime }\Vert _{L^{2}(0,T)}$.
Averaging over $s\in \lbrack 0,T]$ gives 
\begin{equation*}
|g(t)|\leq \frac{1}{T}\int_{0}^{T}|g(s)|\,ds+\sqrt{T}\Vert g^{\prime }\Vert
_{L^{2}(0,T)}.
\end{equation*}%
By Cauchy-Schwarz, 
\begin{equation*}
\frac{1}{T}\int_{0}^{T}|g(s)|\,ds\leq \frac{1}{\sqrt{T}}\Vert g\Vert
_{L^{2}(0,T)}.
\end{equation*}%
Thus for every $t$, 
\begin{equation*}
|g(t)|\leq \frac{1}{\sqrt{T}}\Vert g\Vert _{L^{2}(0,T)}+\sqrt{T}\Vert
g^{\prime }\Vert _{L^{2}(0,T)}.
\end{equation*}%
Taking the supremum over $t$ and using $(a+b)^{2}\leq 2(a^{2}+b^{2})$ yields 
\begin{equation*}
\Vert g\Vert _{L^{\infty }(0,T)}^{2}\leq 2\left( \frac{1}{T}\Vert g\Vert
_{L^{2}(0,T)}^{2}+T\Vert g^{\prime }\Vert _{L^{2}(0,T)}^{2}\right) \leq
2\max \left \{ \frac{1}{T},T\right \} \Vert g\Vert _{H^{1}(0,T)}^{2}.
\end{equation*}%
So $C_{T}=\sqrt{2\max \{1/T,T\}}$ works, but the simpler constant $\sqrt{%
2/T+2T}$ is also valid (since $\max \{1/T,T\} \leq \frac{1}{T}+T$ when $T\geq
1$, but the inequality $(a+b)^{2}\leq 2a^{2}+2b^{2}$ is uniform). Actually
from the pointwise bound: 
\begin{equation*}
\sup_{t}|g(t)|\leq \frac{1}{\sqrt{T}}\Vert g\Vert _{L^{2}}+\sqrt{T}\Vert
g^{\prime }\Vert _{L^{2}}
\end{equation*}%
and by Cauchy-Schwarz in $\mathbb{R}^{2}$, 
\begin{equation*}
\frac{1}{\sqrt{T}}\Vert g\Vert _{L^{2}}+\sqrt{T}\Vert g^{\prime }\Vert
_{L^{2}}\leq \sqrt{\frac{1}{T}+T}\; \sqrt{\Vert g\Vert _{L^{2}}^{2}+\Vert
g^{\prime }\Vert _{L^{2}}^{2}}=\sqrt{\frac{1}{T}+T}\; \Vert g\Vert _{H^{1}}.
\end{equation*}%
Hence $C_{T}=\sqrt{T+\frac{1}{T}}$.$_{\blacksquare }$

\bigskip

\noindent \textit{Proof of Lemma 5. } Fix $x\in \Omega $ such that $%
f(x,\cdot )\in H^{1}(0,T)$ (this holds for almost every $x$). Applying the
Morrey Lemma with $g(t)=f(x,t)$, we obtain, for every $t\in \lbrack 0,T]$, 
\begin{equation*}
|f(x,t)|\leq \sqrt{T+\frac{1}{T}}\; \left(
\int \limits_{0}^{T}|f(x,s)|^{2}ds\right) ^{1/2}+\sqrt{T+\frac{1}{T}}\; \left(
\int \limits_{0}^{T}|\partial _{s}f(x,s)|^{2}ds\right) ^{1/2}.
\end{equation*}%
Taking the supremum in $t$ gives 
\begin{equation*}
M(x)\leq C_{T}\left( \int \limits_{0}^{T}|f(x,s)|^{2}ds\right)
^{1/2}+C_{T}\left( \int \limits_{0}^{T}|\partial _{s}f(x,s)|^{2}ds\right)
^{1/2},
\end{equation*}%
with $C_{T}=\sqrt{T+\frac{1}{T}}$.

\noindent Now, let us find an estimate in $L^{2}(\Omega )$ of $M.$ Square
the inequality and use $(a+b)^{2}\leq 2(a^{2}+b^{2})$: 
\begin{equation*}
M(x)^{2}\leq
2C_{T}^{2}\int \limits_{0}^{T}|f(x,s)|^{2}ds\;+\;2C_{T}^{2}\int%
\limits_{0}^{T}|\partial _{s}f(x,s)|^{2}ds.
\end{equation*}%
Now integrate over $\Omega $: 
\begin{equation*}
\int \limits_{\Omega }M(x)^{2}dx\leq 2C_{T}^{2}\int \limits_{\Omega
}\int \limits_{0}^{T}|f(x,s)|^{2}ds\,dx + 2C_{T}^{2}\int \limits_{\Omega
}\int \limits_{0}^{T}|\partial _{s}f(x,s)|^{2}ds\,dx.
\end{equation*}%
By Tonelli's theorem (all integrands are nonnegative), we may swap the
order: 
\begin{equation*}
\int \limits_{\Omega }M(x)^{2}dx\leq
2C_{T}^{2}\int \limits_{0}^{T}\int \limits_{\Omega }|f(x,s)|^{2}dx\,ds +
2C_{T}^{2}\int \limits_{0}^{T}\int \limits_{\Omega }|\partial
_{s}f(x,s)|^{2}dx\,ds.
\end{equation*}%
But these two terms are exactly $2C_{T}^{2}$ times the two summands of the $%
H^{1}$ norm: 
\begin{equation*}
\int \limits_{\Omega }M(x)^{2}dx\leq 2C_{T}^{2}\Vert f\Vert
_{L^{2}(0,T;L^{2}(\Omega ))}^{2} + 2C_{T}^{2}\Vert \partial _{t}f\Vert
_{L^{2}(0,T;L^{2}(\Omega ))}^{2}.
\end{equation*}%
Thus 
\begin{equation*}
\Vert M\Vert _{L^{2}(\Omega )}^{2}\leq 2C_{T}^{2}\Vert f\Vert
_{H^{1}(0,T;L^{2}(\Omega ))}^{2}.
\end{equation*}%
This completes the proof.$_{\blacksquare }$

\begin{theorem}
\label{Thm PeriodicEstimates}Assume 
\begin{equation*}
f\in H^{1}(\mathbb{R}; L^{2}(\Omega ))\text{, }T-\text{time-periodic}.
\end{equation*}%
For $i=1,2$, define $f_{i}\in L^{2}(\Omega )$, by 
\begin{equation*}
f_{1}(x):=\underset{t\in \lbrack 0,T]}{\essmin}f(x,t)\leq f_{2}(x):=\underset%
{t\in \lbrack 0,T]}{\essmax}f(x,t).
\end{equation*}%
Let the $u_{\varepsilon }^{i}(x)$ be (unique) solutions of the stationary
problems 
\begin{equation}
(SP)\left \{ 
\begin{array}{lr}
-\Delta _{x}u_{\varepsilon }^{i}(x)=f_{i}(x), & x,\in \Omega , \\ 
u_{\varepsilon }^{i}\geq 0,\partial _{\nu }u_{\varepsilon }^{i}+\beta
(\varepsilon )\sigma (u_{\varepsilon }^{i})\geq 0, &  \\ 
u_{\varepsilon }^{i}(\partial _{\nu }u_{\varepsilon }^{i}+\beta (\varepsilon
)\sigma (u_{\varepsilon }^{i}))=0, & x\in l_{\varepsilon }, \\ 
\partial _{\nu }u_{\varepsilon }^{i}=0, & x\in \gamma _{\varepsilon }, \\ 
u_{\varepsilon }^{i}(x)=0, & x\in \Gamma _{1}.%
\end{array}%
\right. 
\end{equation}%
Then, we have%
\begin{equation}
u_{\varepsilon }^{1}(x)\leq u_{\varepsilon }^{T}(x,t)\leq u_{\varepsilon
}^{2}(x)\text{ for any }t\in \mathbb{R},\text{ on }l_{\varepsilon }.
\label{eqComparisonStationary}
\end{equation}
\end{theorem}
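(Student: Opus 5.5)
Since $f_1(x)\le f(x,t)\le f_2(x)$ for a.e. $x$ and every $t$, the plan is to compare the periodic solution with the time-independent functions $u^i_\varepsilon$. We show that each $u^i_\varepsilon$, regarded as constant in $t$, is a sub- resp. supersolution of the dynamic problem, and then use an $L^1$/$L^2$ contraction of the negative (resp. positive) part of the difference together with periodicity. Existence and uniqueness of $u^i_\varepsilon$ follow as in Theorem~1: take the stationary variational inequality on $\mathcal{K}_\varepsilon$, with a strictly monotone operator coming from $\sigma$ and coercivity from Poincar\'e on $\Gamma_1$. Since $\partial_t u^i_\varepsilon=0$, the function $u^i_\varepsilon$ also solves $(IVP)$ with source $f_i$ and initial datum $u^i_\varepsilon|_{l_\varepsilon}$, and trivially it is $T$-periodic.

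\textbf{Key step (comparison on $l_\varepsilon$).} Put $w=u^T_\varepsilon-u^2_\varepsilon$ and work with the penalized approximations (\ref{4}) (or their periodic analogues). Subtract the identities (\ref{5}) for $u^T_\varepsilon$ (source $f$) and for $u^2_\varepsilon$ (source $f_2$), and test with $w^+$, which is admissible in $L^2(0,T;H^1(\Omega,\Gamma_1))$. This gives
\[
\frac{\beta(\varepsilon)}{2}\frac{d}{dt}\|w^+\|^2_{L^2(l_\varepsilon)}+\|\nabla w^+\|^2_{L^2(\Omega)}+\beta(\varepsilon)\int_{l_\varepsilon}(\sigma(u^T_\varepsilon)-\sigma(u^2_\varepsilon))w^+\,dx_1+\text{(penalty terms)}=\int_\Omega (f-f_2)w^+\,dx\le 0 .
\]
The penalty terms $\delta^{-1}((u^T)^- -(u^2)^-)w^+$ are nonnegative because $r\mapsto r^-$ is nondecreasing. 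The $\sigma$-term is at least $k_1\beta(\varepsilon)\|w^+\|^2_{L^2(l_\varepsilon)}$. Hence $\|w^+(t)\|_{L^2(l_\varepsilon)}^2$ decays at least like $e^{-2k_1 t}$. Alternatively, one can work directly with the variational inequality (\ref{2}), choosing the test functions $\phi=u^T_\varepsilon-w^+$ and $u^2_\varepsilon+w^+$, both of which stay in $\mathcal{K}_\varepsilon$ since they equal $\min/\max$ of the two functions; adding the two inequalities gives the same estimate.

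\textbf{Using periodicity.} Integrating from $t_0$ to $t_0+T$ and using $w^+(t_0+T)=w^+(t_0)$ (periodicity of $u^T_\varepsilon$, time-independence of $u^2_\varepsilon$) gives
\[
\int_{t_0}^{t_0+T}\Bigl(\|\nabla w^+\|^2+k_1\beta(\varepsilon)\|w^+\|^2_{L^2(l_\varepsilon)}\Bigr)\,dt\le 0 .
\]
So $w^+=0$ on $l_\varepsilon$ for a.e. $t$, and by continuity in time for all $t$. The lower bound $u^1_\varepsilon\le u^T_\varepsilon$ follows symmetrically by testing with $(u^1_\varepsilon-u^T_\varepsilon)^+$ and using $f_1\le f$. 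Since $\|\nabla w^+\|=0$ and $w^+=0$ on $\Gamma_1$, the inequality in fact holds in all of $\Omega$, which is stronger than needed.

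\textbf{Main obstacle.} The delicate part is justifying the comparison at the level of the Signorini variational inequality, that is, checking that the truncated test functions are admissible and that the time-derivative term yields $\frac{1}{2}\frac{d}{dt}\|w^+\|^2$. For $u^T_\varepsilon$ this needs $\partial_t u^T_\varepsilon\in L^2_{loc}(\mathbb{R};L^2(l_\varepsilon))$, which the strong solution framework of Kenmochi provides. I would first try the penalized route and pass to the limit $\delta\to0$ using (\ref{8}). For the periodic solution this requires knowing that it is the limit of penalized periodic solutions, or else that the iterates $F^n(u^0)$ of the Poincar\'e map converge to it. The latter holds because the contraction estimate above, applied to two solutions, gives $\|F(a)-F(b)\|\le e^{-k_1T}\|a-b\|$ in the rescaled time. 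So it suffices to prove the comparison for $(IVP)$ solutions starting below $u^2_\varepsilon$ and pass to the fixed point.
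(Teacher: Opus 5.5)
Your argument is correct, but it takes a different route from the paper.

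\textbf{The paper's route.} The paper never compares $u^T_\varepsilon$ directly with $u^i_\varepsilon$. It first shows that the order interval $K=[u^1_\varepsilon,u^2_\varepsilon]\subset L^2(l_\varepsilon)$ is invariant under the flow of $(IVP)$. This uses the same truncation computation as yours, but only to see that $\|(u_\varepsilon(t)-u^2_\varepsilon)_+\|_{L^2(l_\varepsilon)}$ is nonincreasing. It then obtains a fixed point of the Poincar\'e map in $K$ by Schauder's theorem: continuity comes from the $L^2$-contraction, and compactness from the regularizing effect $u(T)\in D(\phi^T)$. Finally it identifies this fixed point with $u^T_\varepsilon$, tacitly through uniqueness of the periodic solution.

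\textbf{Your route.} You run the truncation estimate on the periodic solution itself and integrate over one period, so that periodicity kills the term $\frac{\beta(\varepsilon)}{2}\frac{d}{dt}\|w^+\|^2_{L^2(l_\varepsilon)}$. Your variational-inequality version, with the test functions $\min(u^T_\varepsilon,u^2_\varepsilon)$ and $\max(u^T_\varepsilon,u^2_\varepsilon)$, is already rigorous. The inequality integrated over one period suffices, and $\partial_t u^T_\varepsilon\in L^2_{loc}(\mathbb{R};L^2(l_\varepsilon))$ comes with the strong periodic solution. So the penalization and the iteration of $F$ in your last paragraph are unnecessary. Also, only monotonicity of $\sigma$ is used, not $k_1>0$.

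\textbf{What your route buys.} It needs no fixed point theorem, no compactness, and no appeal to uniqueness, so it bounds every periodic strong solution. It also gives the bounds in all of $\Omega$ rather than only on $l_\varepsilon$. That is in fact what the paper needs in its final homogenization theorem, where it multiplies by test functions $\phi\ge 0$ on $\Omega$.

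\textbf{What the paper's route buys.} It gives a second, independent existence proof for the periodic solution, and it shows that $K$ is invariant for arbitrary initial data of $(IVP)$.

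Your contraction estimate $\|F(a)-F(b)\|\le e^{-k_1T}\|a-b\|$ would let one merge the two approaches. Replacing Schauder by Banach's fixed point theorem on $K$ gives existence, uniqueness and the bounds at once, without the compactness step.
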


\noindent \textit{Proof.} \ We point out that, thanks to Lemma \ref%
{LemmHypoKenmoch} we can apply Theorem 1.1.2 of \cite{Kenmochi1981} and
then, for any $u^{0}\in L_{+}^{2}(l_{\varepsilon })$ there exists a unique
weak solution $u\in C([0,T]; L^{2}(l_{\varepsilon }))$ of $(IVP)$ (i.e. of (%
\ref{EcAbstract_Formulation}) associated to the convex function $\phi ^{t}(u)
$ defined above). Moreover, for any $t\in (0,T],$ such a solution satisfies
(the regularizing effect) that $u(\cdot,t)\in D(\phi ^{t}).$ We will get the
estimate (\ref{eqComparisonStationary}) by finding a fixed point function $%
u^{0}$ of the Poincar\'{e} map $F:K\rightarrow K,$ with $K\subset
L^{2}(l_{\varepsilon })$ a closed and convex set, 
\begin{equation*}
F(u^{0}(\cdot)) = u_{\varepsilon }(\cdot,T)\text{.}
\end{equation*}%
As in (\cite{Badii-Diaz1999}), we will apply the Schauder fixed point
theorem on the space $L^{2}(l_{\varepsilon })$. We need to select a closed
and convex set $K\subset L^{2}(l_{\varepsilon })$ such that i) $F(K)\subset
K $; ii) $F\vert_{K}$ is continuous, iii) $F(K)$ is relatively compact in $%
L^{2}(l_{\varepsilon })$. As the set $K$, we will take the \textit{interval} 
$[u_{\varepsilon }^{1}(\cdot),u_{\varepsilon }^{2}(\cdot)]$ $=\{w\in
L^{2}(l_{\varepsilon }),$ $u_{\varepsilon }^{1}(x)\leq w(x)\leq
u_{\varepsilon }^{2}(x)$ $a.e.$ $x\in l_{\varepsilon }\}$. Obviously $K$ is
a closed, convex and non empty set of $L^{2}(l_{\varepsilon })$. It is
useful to prove property i) as a separate auxiliary result.

\begin{lemma}
Let $f(x,t)$ and $f_{i}(x)$, $i=1,2,$ as in Theorem \ref{Thm
PeriodicEstimates}. Let $u_{\varepsilon }(x,t)$ be the unique solution of $%
(IVP)$ corresponding to an initial datum $u^{0}\in K$. Then, $u_{\varepsilon
}(\cdot,t)\in K$ for any $t\in \lbrack 0,T].$
\end{lemma}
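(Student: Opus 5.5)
We prove the invariance of $K$ by a comparison (T-monotonicity) argument. The stationary solutions $u_{\varepsilon}^{1}$, $u_{\varepsilon}^{2}$, viewed as time-independent functions, act respectively as a sub- and a supersolution of $(IVP)$. We show $u_{\varepsilon}(\cdot,t)\le u_{\varepsilon}^{2}$ on $l_{\varepsilon}$ for all $t$; the lower bound is symmetric.

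\emph{Step 1: the stationary function is a supersolution.} Since $u_{\varepsilon}^{2}$ does not depend on $t$, we have $\partial_t u^2_\varepsilon=0$. Moreover, $-\Delta u^{2}_{\varepsilon}=f_2\ge f(\cdot,t)$ for a.e. $t$. On $l_{\varepsilon}$ the Signorini relations for $u^{2}_{\varepsilon}$ hold with the dynamic term equal to zero. Hence $u^{2}_{\varepsilon}$ satisfies the variational inequality \eqref{2} with $f$ replaced by $f_2$, and $u_\varepsilon^2\ge0$.

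\emph{Step 2: energy estimate for the positive part.} We work first with the penalized problems \eqref{4}, whose solutions $u^\delta_\varepsilon$ converge to $u_\varepsilon$ by \eqref{8}, and with the analogous penalized stationary problem for $u^2_\varepsilon$. Alternatively, we use the variational inequalities directly with the test functions
\[
\phi=u_{\varepsilon}-(u_{\varepsilon}-u^{2}_{\varepsilon})^{+}=\min(u_\varepsilon,u^2_\varepsilon)
\]
in \eqref{2}, and $\max(u_\varepsilon,u^2_\varepsilon)$ in the stationary inequality. Both test functions are admissible: they are $\ge0$ on $l_\varepsilon$, vanish on $\Gamma_1$, and lie in $H^1$. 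Adding the two inequalities with $w=(u_{\varepsilon}-u^{2}_{\varepsilon})^{+}$ gives
\[
\frac{\beta(\varepsilon)}{2}\frac{d}{dt}\|w\|^2_{L^2(l_\varepsilon)}+\|\nabla w\|^2_{L^2(\Omega)}+\beta(\varepsilon)\int_{l_\varepsilon}(\sigma(u_\varepsilon)-\sigma(u^2_\varepsilon))w\,dx_1\le\int_\Omega (f-f_2)w\,dx\le0 .
\]
The $\sigma$-term is nonnegative by monotonicity. Integrating in time, we obtain $\|w(t)\|_{L^2(l_\varepsilon)}\le\|w(0)\|_{L^2(l_\varepsilon)}=0$, since $u^0\le u^2_\varepsilon$ because $u^0\in K$. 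For the lower bound we use $w=(u^1_\varepsilon-u_\varepsilon)^+$ and $f-f_1\ge0$ in the same way.

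\emph{Main obstacle.} The delicate point is the rigorous justification of $\int\partial_t u_\varepsilon\, w=\frac12\frac{d}{dt}\|w\|^2$ and of the admissibility of the truncated test functions at the level of strong solutions. This holds because $\partial_t u_\varepsilon\in L^2(l^T_\varepsilon)$ and $u^2_\varepsilon$ is time independent, so $\partial_t w=\chi_{\{w>0\}}\partial_t u_\varepsilon$. We must also handle initial data $u^0\in K$ that are only in $L^2$, for which solutions are merely weak. For these we first prove the estimate for regular data ($u^0\in H^1_0$) and then pass to the limit using the $L^2$-contraction (continuous dependence on initial data) of the subdifferential flow from \cite{Kenmochi1981}, since $K$ is closed in $L^2(l_\varepsilon)$.
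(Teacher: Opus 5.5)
Your proposal is correct and follows essentially the same route as the paper. The paper also subtracts the evolution and stationary problems and tests with $(u_\varepsilon-u^2_\varepsilon)_+$. It uses the monotonicity of $\sigma$ and of the Signorini graph, together with $f\le f_2$, to obtain $\frac{d}{dt}\|(u_\varepsilon-u^2_\varepsilon)_+\|^2_{L^2(l_\varepsilon)}\le 0$, and it reduces to regular initial data via the $L^2$ continuous dependence of the subdifferential flow. Your admissible test functions $\min(u_\varepsilon,u^2_\varepsilon)$ and $\max(u_\varepsilon,u^2_\varepsilon)$ in the two variational inequalities are just a rigorous version of the paper's formal "multiply and integrate by parts" step.
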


\noindent \textit{Proof.} By the continuous dependence of solutions of (\ref%
{EcAbstract_Formulation}) with respect to the initial datum (see, e.g.
Theorem 1.1.1 of \cite{Kenmochi1981}) in order to get the comparisons
included in the information $u_{\varepsilon }(\cdot,t)\in K$ it is enough to
assume that $u^{0}\in D(\phi ^{t}).$ Let us prove that $u_{\varepsilon
}(x,t)\leq u_{\varepsilon }^{2}(x)$ (the proof of the other comparison $%
u_{\varepsilon }(x,t)\geq u_{\varepsilon }^{1}(x)$ is similar). We subtract
the differential equations for $u_{\varepsilon }(x,t)$ and for $%
u_{\varepsilon }^{2}(x)$, multiply by $\left[ u_{\varepsilon
}(x,t)-u_{\varepsilon }^{2}(x)\right] _{+}=\max (u_{\varepsilon
}(x,t)-u_{\varepsilon }^{2}(x),0)$ and integrate by parts. After using the
boundary conditions and the monotonicity of the nonlinear terms $\partial
j_{\sigma }(r)=\sigma (r)$ and $\partial j_{S}(r) = \theta(r)$, we arrive to
the inequality 
\begin{equation*}
\begin{array}{c}
\int \limits_{\Omega }\left \vert \nabla \left[ u_{\varepsilon
}(x,t)-u_{\varepsilon }^{2}(x)\right]_{+}\right \vert ^{2}dx + \frac{d}{dt}%
\int \limits_{l_{\varepsilon }}\left \vert \left[ u_{\varepsilon }(s, t) -
u_{\varepsilon }^{2}(s)\right] _{+}\right \vert ^{2}ds \\ 
\leq \int \limits_{\{x\in \Omega \vert u_{\varepsilon }(x,t) > u_{\varepsilon
}^{2}(x)\}}(f(x,t)-f^{2}(x))(u_{\varepsilon }(x,t) - u_{\varepsilon
}^{2}(x))_{+}dx.%
\end{array}%
\end{equation*}%
Then, since $f(x,t)\leq f^{2}(x)$ on $\Omega $, we get that 
\begin{equation*}
\int \limits_{l_{\varepsilon }}\left \vert \left[ u_{\varepsilon
}(s,t)-u_{\varepsilon }^{2}(s)\right] _{+}\right \vert ^{2}ds\leq
\int \limits_{l_{\varepsilon }}\left \vert \left[ u^{0}(s)-u_{\varepsilon
}^{2}(s)\right] _{+}\right \vert ^{2}ds,
\end{equation*}%
and since $u^{0}\in K$, we get that $u_{\varepsilon }(\cdot,t)\leq
u_{\varepsilon }^{2}(\cdot)$ on $l_{\varepsilon }._{\blacksquare }$

\noindent \textit{Proof of Theorem \ref{Thm PeriodicEstimates}
(continuation).} \ The above Lemma shows that $F([u_{\varepsilon
}^{1},u_{\varepsilon }^{2}])\subset $ $[u_{\varepsilon }^{1},u_{\varepsilon
}^{2}]$. Let us check that $F|_{K}$ is continuous (property ii)). It is a
trivial consequence of the $L^{2}(l_{\varepsilon })$ estimate 
\begin{equation*}
\int_{l_{\varepsilon }}\left \vert u_{\varepsilon }(s,t)-u_{\delta
}(s,t)\right \vert ^{2}ds\leq \int_{l_{\varepsilon }}\left \vert
u^{0}(s)-u_{\delta }^{0}(s)\right \vert ^{2}ds,
\end{equation*}%
which holds, thanks to Theorem1.1.1 of \cite{Kenmochi1981} if $u_{\delta
}(x,t)$ denotes the solution of 
\begin{equation}
\left \{ 
\begin{array}{lc}
\frac{du}{dt}(t)+\partial \phi ^{t}(u(t))\ni 0 & \text{on }H \\ 
u(0)=u_{\delta }^{0}, & 
\end{array}%
\right. 
\end{equation}%
and $u_{\delta }^{0}\rightarrow u^{0}$ in $L^{2}(l_{\varepsilon })$.
Finally, to prove that $F(K)$ is relatively compact in $L^{2}(l_{\varepsilon
})$ (property (iii)) it is enough to observe that by the regularization
effect (see Theorem 2.1.1 of \cite{Kenmochi1981}) we have 
$u(T)\in D(\phi ^{T})\subset H^{1/2}(l_{\varepsilon })$, and since the
inclusion $H^{1/2}(l_{\varepsilon })\subset L^{2}(l_{\varepsilon })$ is
compact, we get that $F(K)$ is relatively compact in $L^{2}(l_{\varepsilon })
$. Then, by the Schauder fixed point theorem, there exists a fixed point $%
u_{\varepsilon }^{T}\in K$ of the Poincar\'{e} map $F$ and the proof is
complete. $_{\blacksquare }$

\subsection{Time periodic solutions of the homogenized problem}

Applying the techniques of the homogenization result for the initial value
problem $(IVP)$, we get

\begin{theorem}
Assume $f\in H^{1}(\mathbb{R},L^{2}(\Omega ))$, $T-$time-periodic, then, we
have that $u_{\varepsilon }^{T}(x,t)\rightharpoonup {u}_{0}^{T}(x,t)$ with ${%
u}_{0}^{T}(x,t)$ $T-$periodic solution of the homogenized problem of $(PP),$
given by the stationary problem (dependinhg on $t$ as a parameter)%
\begin{equation}
(PP)_{Hom}\left \{ 
\begin{array}{lr}
-\Delta _{x}{u}_{0}(x,t)=f(x,t), & (x,t)\in Q^{\infty }, \\ 
-\partial _{x_{2}}u_{0}+\mathcal{M}u_{0}=\mathcal{M}H_{u_{0}}, & (x,t)\in
\Gamma _{2}^{\infty }, \\ 
H_{u_{0}}\geq 0,\, \partial _{t}H_{u_{0}}+\mathcal{L}H_{u_{0}}+\sigma
(H_{u_{0}})\geq \mathcal{L}u_{0}, & (x,t)\in \Gamma _{2}^{\infty }, \\ 
H_{u_{0}}(\partial _{t}H_{u_{0}}+\mathcal{L}H_{u_{0}}+\sigma (H_{u_{0}})-%
\mathcal{L}u_{0})=0, & (x,t)\in \Gamma _{2}^{\infty }, \\ 
H_{u_{0}}(x,t)=H_{u_{0}}(x,t+T), & \text{for any }t\in \mathbb{R}\text{, }%
x\in \Gamma _{2}, \\ 
u_{0}(x,t)=0, & (x,t)\in \Gamma _{1}^{\infty },%
\end{array}%
\right.
\end{equation}%
where \textit{\textrm{$\mathcal{M}$}} $=\frac{\pi }{\alpha ^{2}}$, \textit{%
\textrm{$\mathcal{L}$}}$=\frac{\pi }{2C_{0}l_{0}\alpha ^{2}}$. Moreover, if $%
{u}_{0}^{i}(x)$ is the homogenized stationary problem associated to the data 
$f^{i}$ (see \cite{DGSH2Book}) we have 
\begin{equation}
{u}_{0}^{1}(x)\leq {u}_{0}^{T}(x,t)\leq {u}_{0}^{2}(x)\text{ a.e. }x\in
\Omega \text{, for }t\in (0,T).  \label{eqFinal comparison}
\end{equation}
\end{theorem}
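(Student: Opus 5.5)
The plan is to run the proof of Theorem~\ref{main hom theorem} (oscillating test functions) on one period $[0,T]$, after extracting a limit of the periodic solutions $u^T_\varepsilon$ with periodicity kept.

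\emph{Uniform bounds and compactness.} I restrict $u^T_\varepsilon$ to $[0,T]$ and view it as the solution of (\ref{1}) with initial datum $u^0_\varepsilon:=u^T_\varepsilon(\cdot,0)=u^T_\varepsilon(\cdot,T)$. Energy estimates uniform in $\varepsilon$ then follow.
\begin{itemize}
\item Test the variational inequality with $\phi=0$ and with $\phi=2u^T_\varepsilon$. This gives the identity obtained by multiplying the problem by $u^T_\varepsilon$.
\item Integrate over one period. The boundary term $\frac{\beta(\varepsilon)}{2}\frac{d}{dt}\|u^T_\varepsilon\|^2_{L^2(l_\varepsilon)}$ integrates to zero by periodicity.
\item Poincar\'e's inequality and $\sigma(u)u\ge k_1u^2$ then give a bound on $\|u^T_\varepsilon\|_{L^2(0,T;H^1(\Omega,\Gamma_1))}$ and on $\beta(\varepsilon)\|u^T_\varepsilon\|^2_{L^2(l^T_\varepsilon)}$ by $K\|f\|^2_{L^2(Q^T)}$.
\item Testing formally with $\partial_t u^T_\varepsilon$ (Galerkin/penalized level, as in (\ref{7})) and using periodicity again bounds $\beta(\varepsilon)\|\partial_tu^T_\varepsilon\|^2_{L^2(l^T_\varepsilon)}$ and $\max_t\|\nabla u^T_\varepsilon\|^2$ by $\|f\|_{H^1(0,T;L^2)}$.
\item The comparison (\ref{eqComparisonStationary}) controls $\beta(\varepsilon)\|u^T_\varepsilon(\cdot,0)\|^2_{L^2(l_\varepsilon)}$ uniformly. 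Here I use that $\beta(\varepsilon)\|u^2_\varepsilon\|^2_{L^2(l_\varepsilon)}$ is bounded by the stationary estimate.
\end{itemize}
Hence, along a subsequence, $u^T_\varepsilon\rightharpoonup u^T_0$ weakly in $L^2(0,T;H^1(\Omega,\Gamma_1))$. Extending periodically gives the convergence on $\mathbb{R}$, so $u^T_0$ is $T$-periodic.

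\emph{Passing to the limit.} I use the test function $\psi=\phi-Q_{\varepsilon,\phi}$, where $H^j_{\phi,\varepsilon}$ is now the \emph{periodic} solution of the obstacle ODE (\ref{25}) with $\mathcal{L}\phi(P^j_\varepsilon,t)$ on the right.
\begin{itemize}
\item Existence and uniqueness of the periodic solution follow from (\ref{24_2}): the map $u^0\mapsto H_{\phi,u^0}(T)$ is a strict contraction. Indeed, with $\phi_1=\phi_2$, $\frac12|\Delta H(T)|^2+\mathcal{L}\int|\Delta H|^2\le\frac12|\Delta u^0|^2$, and $\sigma$ being strongly monotone makes the contraction strict via the differential form.
\item The advantage of periodic $H$ is that the troublesome term of (\ref{36}), $-\frac{\beta(\varepsilon)}2\|\psi(\cdot,0)-u^0\|^2$, is replaced by $\frac{\beta(\varepsilon)}{2}\bigl(\|\psi(T)-u_\varepsilon(T)\|^2-\|\psi(0)-u_\varepsilon(0)\|^2\bigr)$. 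This vanishes exactly because both $\psi|_{l_\varepsilon}=H^j$ and $u^T_\varepsilon$ are $T$-periodic, so no error term from the unknown initial trace appears.
\item The remaining steps repeat (\ref{38})--(\ref{48}) verbatim: Lemmas 2 and 3, the sign $\mathcal{P}_\varepsilon\le0$, and the capacity limit.
\end{itemize}
This yields
\[
\int_{Q^T}\nabla\phi\nabla(\phi-u^T_0)\,dx\,dt+\mathcal{M}\int_{\Gamma_2^T}(\phi-H^{per}_\phi)(\phi-u^T_0)\,dx_1dt\ge\int_{Q^T}f(\phi-u^T_0)\,dx\,dt .
\]
Minty's trick then identifies $u^T_0$ as a solution of $(PP)_{Hom}$. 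For this I set $\phi=u^T_0\pm\lambda v$, let $\lambda\to0$, and use the Lipschitz continuity (\ref{24}) of $\phi\mapsto H^{per}_\phi$ together with its monotonicity from (\ref{24_2}). Uniqueness of the periodic homogenized solution then gives convergence of the whole sequence.

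\emph{Comparison.} To obtain (\ref{eqFinal comparison}), I pass to the limit in $u^1_\varepsilon\le u^T_\varepsilon\le u^2_\varepsilon$. I need these on $\Omega$, not only on $l_\varepsilon$; they follow by the maximum principle, since the differences are harmonic-type with the corresponding boundary signs. Weak convergence preserves a.e.\ inequalities, and $u^i_\varepsilon\rightharpoonup u^i_0$ by the stationary result of \cite{DGSH2Book}.

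The main obstacle is the uniform-in-$\varepsilon$ control and the periodic version of the limit in (\ref{48}). The sequence $u^T_\varepsilon(\cdot,0)$ lives on varying sets, so there is no fixed $u^0$ to compare with. My first attempt is to rely entirely on periodic test functions, as above, so that the initial-trace terms cancel identically. Failing that, I would fall back on the bound from the comparison with $u^2_\varepsilon$.
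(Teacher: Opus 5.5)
Your proposal is correct and follows the paper's route. The paper only says the result is an obvious adaptation of the proof of Theorem~\ref{main hom theorem}, followed by a passage to the weak limit in the comparison, and your periodicity-based uniform bounds, periodic cell obstacle problem (which makes the initial-trace term in (\ref{36}) cancel exactly) and Minty identification are exactly the details it omits. Your maximum-principle step extending $u^1_\varepsilon\le u^T_\varepsilon\le u^2_\varepsilon$ from $l_\varepsilon$ to $\Omega$ is also genuinely needed: the paper's comparison theorem states this inequality only on $l_\varepsilon$, but its final argument uses it in $\Omega$.
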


\noindent \textit{Proof.} \ It is an obvious adaptation of the proof of
Theorem~\ref{main hom theorem} to this setting and we escape the details.
Moreover, if $\phi $ is an arbitrary function from\textit{\textrm{\ $L^{2}(%
\mathbb{R}; H^{1}(\Omega ,\Gamma _{1}))$, }}$T-$periodic,\textit{\textrm{\ }}%
$\phi \geq 0$ on $\Omega$ for a.e. $t$, then, $u_{\varepsilon }^{1}(x)\phi
(x,t)\leq u_{\varepsilon }^{T}(x,t)\phi (x,t)\leq u_{\varepsilon
}^{2}(x)\phi (x,t)$ for any $t\in \mathbb{R},$ and by the weak convergence
we get that 
\begin{equation*}
{u}_{0}^{1}(x)\phi (x,t)\leq {u}_{0}^{T}(x,t)\phi (x,t)\leq {u}%
_{0}^{2}(x)\phi (x,t)\text{ }
\end{equation*}%
for any arbitrary test function, which implies (\ref{eqFinal comparison}). 
\textit{\textrm{\ }}$_{\blacksquare }$

\begin{remark}
Finally, by using the techniques of \cite{DPodoSh RACSAM2024} it seems
possible to prove that if the forcing $f(x,t)$ is strongly negative, then,
the transport becomes impossible: the trace vanishes on $\Gamma _{2}^{T}$,
the membrane becomes inactive. Thus, one obtains two regimes: active
transport regime, or blocked membrane regime according to the values of $%
f(x,t)$.
\end{remark}

\bigskip

\textbf{Acknowledgement} JID was partially supported by the project
PID-2020-112517GBI00 of the AEI and MCIU/AEI/10.13039/-501100011033/FEDER,
EU.

\end{document}